\newtheorem{theorem}{Theorem}[section]
\newtheorem{lemma}[theorem]{Lemma}
\newtheorem{corollary}[theorem]{Corollary}
\theoremstyle{definition}
\theoremstyle{remark}
\newtheorem{remark}[theorem]{Remark}
\numberwithin{equation}{section}
\begin{document}

\title[A maximum principle related to volume growth]{A maximum principle related to volume growth and applications} 

%    Information for first author
\author{L. J. Al\'\i as}
\address{Departamento de Matem\'aticas, Universidad de Murcia, E-30100 Espinardo, Murcia, Spain}
\email{ljalias@um.es}
\thanks{This research is a result of the activity developed within the framework of the Programme in Support of Excellence Groups of the Regi\'on de Murcia, Spain, by Fundaci\'on S\'eneca, Science and Technology Agency of the Regi\'on de Murcia.\\
\indent Luis J. Al\'\i as was partially supported
by MICINN/FEDER project PGC2018-097046-B-I00 and Fundaci\'on S\'eneca project 19901/GERM/15, Spain.\\
\indent Antonio Caminha was partially supported by PRONEX/FUNCAP/CNPQ-PR2-0101-00089.01.00/15
and Est\'{\i}mulo \`a Coopera\c c\~ao Cient\'{\i}fica e Desenvolvimento da P\'{o}s-Gradua\c c\~ao Funcap/CAPES
Projeto nº 88887.165862/2018-00, Brazil.}

%    Information for second author
\author{A. Caminha}
\address{Departamento de Matem\'atica, Universidade Federal do Cear\'a,  Campus do Pici, 60455-760 Fortaleza-Ce, Brazil}
\email{caminha@mat.ufc.br}
%\thanks{}

%    Information for third author
\author{F. Y. do Nascimento}
\address{Universidade Federal do Cear\'a, Crate\'us, BR 226, Km 4, Cear\'a, Brazil}
\email{yure@ufc.br}

%    General info
\subjclass[2000]{{\bf Primary 53C42; Secondary 53B30, 53C50, 53Z05, 83C99}}

\begin{abstract}
In this paper, we derive a new form of maximum principle for {\color{black} smooth functions on a complete noncompact Riemannian manifold $M$ for which there exists a bounded vector field $X$ such that $\langle\nabla f,X\rangle\geq 0$ on $M$ and $\mathrm{div} X\geq af$ outside a suitable compact subset} of $M$, for some constant $a>0$, under the assumption that $M$ has either polynomial or exponential volume growth. We then use it to obtain some straightforward applications to smooth functions and, more interestingly, to Bernstein-type results for hypersurfaces immersed into a Riemannian manifold endowed with a Killing vector field, as well as to some results on the existence and size of minimal submanifolds immersed into a Riemannian manifold endowed with a conformal vector field. 
\end{abstract}

\maketitle

\section{Introduction}

Maximum principles appear naturally in Differential Geometry, due to the fact that many different geometric situations are analytically modeled by certain linear or quasilinear elliptic partial differential operators, for which several versions of maximum principles play a key role in the theory. In a recent paper of us \cite{Alias:19}, we derived a new form of maximum principle which is appropriate for controlling the behavior of a smooth vector field with nonnegative divergence on a complete noncompact Riemannian manifold, and which is the analogue of the simple fact that, on such a manifold, a nonnegative subharmonic function that vanishes at infinity actually vanishes identically (Theorem 2.2 in \cite{Alias:19}).

In this paper we derive {\color{black} a maximum principle for smooth functions $f$ on a complete noncompact Riemannian manifold $M$, assuming that there exists a bounded vector field $X$ on $M$ such that $\langle\nabla f,X\rangle\geq 0$ on $M$ and $\mathrm{div} X\geq af$ outside a suitable compact subset $K$ of $M$, for some constant $a>0$, under the assumption that} $M$ has either polynomial or exponential volume growth (see Theorem \ref{prop:maximum principle II} below for the precise statement of the result). As first consequences of this new maximum principle, we obtain some straightforward applications to {\color{black} solutions and subsolutions of some PDE on $M$}, including an extension of a classical result of Cheng and Yau \cite{Cheng:76}  (see Corollary \ref{cor:Cheng-Yau for exponent 1}), as well as a result somewhat related to a classical result of Fischer-Colbrie and Schoen \cite{FischerColbrie:80}
(see Corollary \ref{coro:Fischer Colbrie-Schoen 1}). 

In Section \ref{section3} we present some interesting applications of our maximum principle to Bernstein-type results for hypersurfaces immersed into a Riemannian manifold endowed with a Killing vector field, allowing us to extend some of the results of \cite{Alias:19} to the case of bounded second fundamental form, replacing the behavior of  the Gauss map of the hypersurface at infinity by {\color{black} an estimate} on the size of the support function {\color{black} on $M$}. See, for instance, Theorem \ref{thm:Bernstein for CMC hypersurfaces}, and its corollaries \ref{coro:Bernstein for CMC hypersurfaces of Lie groups} and \ref{coro:Bernstein for CMC hypersurfaces of Rn},  for the case of constant mean curvature hypersurfaces, and Theorem \ref{thm:Bernstein for Sr constant hypersurfaces} for its generalization to the case of constant higher order mean curvature. Finally, in Section \ref{section4} we apply our maximum principle to some results on the existence and size of minimal submanifolds immersed into a Riemannian manifold endowed with a conformal vector field (Theorem \ref{thm:general on submersions}) and, in particular, into Riemannian warped products (Corollary \ref{cor:specializing to warped products}). {\color{black} Yet more particularly}, we prove, among other results, that there exists no complete {\color{black} noncompact} minimal submanifold with image {\color{black} contained in} an Euclidean ball and having polynomial volume growth (item (a) in Corollary \ref{cor:specializing to Rm}), and that the same happens for complete {\color{black} noncompact} minimal submanifolds into the hyperbolic space with image contained in the open half space bounded by a horosphere  (item (a) in Corollary \ref{cor:specializing to Hm}).
 
\section{The maximum principle}

Let $M$ be a connected, oriented, complete noncompact Riemannian manifold. We denote by $B(p,t)$ the geodesic ball centered at $p$ and with radius $t$. 

Given a continuous function $\sigma:(0,+\infty)\to(0,+\infty)$, we say that {\em $M$ has volume growth like $\sigma(t)$} if there exists $p\in M$ such that 
%$$\lim_{t\to+\infty}\frac{\mathrm{vol}(B(p,t))}{\sigma(t)}=c(p),$$ 
$$\mathrm{vol}(B(p,t))=O(\sigma(t))$$ 
as $t\to+\infty$, where $\mathrm{vol}$ denotes the Riemannian volume. 

If $p,q\in M$ are at distance $d$ from each other, it is straightforward to check that
$$\frac{\mathrm{vol}(B(p,t))}{\sigma(t)}\geq\frac{\mathrm{vol}(B(q,t-d))}{\sigma(t-d)}\cdot\frac{\sigma(t-d)}{\sigma(t)}.$$
%Therefore, if $\mathrm{vol}(B(p,t))=O(\sigma(t))$ as $t\to+\infty$, then the same holds at $q$ as $t\to+\infty$. 
Hence, the choice of $p$ in the notion of volume growth is immaterial, so that, henceforth, we shall simply say that $M$ has {\em polynomial} (resp. {\em exponential}) {\em volume growth}, according to the case.

{\color{black}
For the statement of the coming result, we also recall that a vector field $X$ on a complete Riemannian manifold $M$ is {\em complete} provided its flow $\{\psi_t\}$ is globally defined, and that this is always the case if $X$ is bounded on $M$. Moreover, a subset $\Omega$ of $M$ is {\em stable under the flow of $X$} if $\psi_t(\Omega)\subset\Omega$ for every $t\geq 0$. In particular, this also holds for}  {\color{black} $\Omega=M$.}

\begin{theorem}\label{prop:maximum principle II}
Let $M$ be a connected, oriented, complete noncompact Riemannian manifold, {\color{black} let $X\in\mathfrak{X}(M)$ be a bounded vector field on $M$, with $|X|<c$, and $K$ be a {\color{black} (possibly empty)} compact subset of $M$ such that $M\setminus K$ is stable under the flow of $X$}. Assume that {\color{black} $f\in\mathcal C^{\infty}(M)$ is such that} $\langle\nabla f,X\rangle\geq 0$ on $M$ and $\mathrm{div} X\geq af$ on $M\setminus K$, for some $a>0$.
\begin{enumerate}[$(a)$]
\item If $M$ has polynomial volume growth, then $f\leq 0$ on $M\setminus K$.
\item If $M$ has exponential volume growth, say like $e^{\beta t}$, then $f\leq\frac{c\beta}{a}$ on $M\setminus K$.
\end{enumerate}
\end{theorem}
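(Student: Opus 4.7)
The plan is to argue by contradiction. Suppose the conclusion fails, and choose a real threshold $b$ with $\{f > b\} \cap (M \setminus K) \neq \emptyset$, where $b > 0$ in case (a) and $b > c\beta/a$ in case (b). For any $N > 0$, introduce the bounded Lipschitz function
\begin{equation*}
g := \min\bigl\{(f-b)^+,\,N\bigr\} \geq 0,
\end{equation*}
which is not identically zero and vanishes outside $\{f > b\}$. The associated bounded Lipschitz vector field $Y := gX$ satisfies, by a direct distributional computation on the three regions $\{f \leq b\}$, $\{b < f \leq b+N\}$, $\{f > b+N\}$, the pointwise inequality
\begin{equation*}
\mathrm{div}\,Y \geq ab\,g \geq \frac{ab}{c}\,|Y| =: \alpha\,|Y| \quad \text{on } M \setminus K,
\end{equation*}
using only the hypotheses $\mathrm{div}\,X \geq af$, $\langle \nabla f, X\rangle \geq 0$, $|X| < c$ and the fact that $f \geq b$ wherever $g > 0$. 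In case (a) one has $\alpha > 0$; in case (b) the choice $b > c\beta/a$ forces $\alpha > \beta$.

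I would then apply the divergence theorem to $Y$ on $B(p, R) \setminus K$ for some fixed $p \in M$ and a.e. $R$ so large that $K \subset B(p, R)$. The crucial observation is that the stability hypothesis forces $\langle X, \nu_K\rangle \geq 0$ on $\partial K$ (with $\nu_K$ the outward normal from $K$), so the boundary contribution $-\int_{\partial K} g\,\langle X, \nu_K\rangle\,dA$ on $\partial K$ is nonpositive and may be dropped from the upper estimate. Combining with the coarea identity $H'(R) = \int_{\partial B(p, R)} |Y|\,dA$ for $H(R) := \int_{B(p, R)} |Y|\,dV$ yields the differential inequality
\begin{equation*}
\alpha \bigl(H(R) - H_K\bigr) \leq H'(R) \quad \text{for a.e. large } R,
\end{equation*}
where $H_K := \int_K |Y|\,dV$ is a fixed constant. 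Gronwall then gives $H(R) - H_K \geq (H(R_0) - H_K)\,e^{\alpha(R - R_0)}$ for $R \geq R_0$, provided we can exhibit $R_0$ with $H(R_0) > H_K$. This last point is ensured because $\mathrm{div}\,X \geq af > 0$ on the nonempty open set $\{f > b\} \cap (M \setminus K)$, so the smooth vector field $X$ cannot vanish identically on any open subset thereof, and hence $|Y| = g|X|$ is strictly positive on some open piece of that set, producing the required $R_0$.

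Finally I would contrast this exponential lower bound with the crude estimate $H(R) \leq cN\,\mathrm{vol}(B(p, R))$. In case (a), $\mathrm{vol}(B(p, R))$ is polynomial in $R$, which is incompatible with $H(R) \geq c_0\,e^{\alpha R}$ for any $\alpha > 0$; hence no threshold $b > 0$ can satisfy $\{f > b\} \cap (M \setminus K) \neq \emptyset$, i.e., $f \leq 0$ on $M \setminus K$. In case (b), $\mathrm{vol}(B(p, R)) = O(e^{\beta R})$, which is incompatible with $H(R) \geq c_0\,e^{\alpha R}$ precisely because $\alpha > \beta$; hence $f \leq b$ for every $b > c\beta/a$, so $f \leq c\beta/a$ on $M \setminus K$. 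The main technical subtleties will be the treatment of the compact obstacle $K$, for which the stability assumption is exactly the right hypothesis to discard the $\partial K$-boundary term, and the Lipschitz (rather than smooth) regularity of $g$; the latter can be handled either by working with distributional divergence throughout, or avoided by approximating $\min$ and $\max$ by smooth mollifiers.
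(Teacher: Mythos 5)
Your integral-estimate strategy is genuinely different from the paper's argument (which flows a small ball $B\subset\subset\{f>\alpha\}\cap(M\setminus K)$ by the flow $\psi_t$ of $X$, shows $\frac{d}{dt}\mathrm{vol}(\psi_t(B))=\int_{\psi_t(B)}\mathrm{div}X\geq a\alpha\,\mathrm{vol}(\psi_t(B))$, and contradicts the volume growth because $\psi_t(B)\subset B(p,ct+r)$), and most of it is sound: the truncation $g=\min\{(f-b)^+,N\}$, the a.e. inequality $\mathrm{div}(gX)\geq ab\,g\geq\frac{ab}{c}|gX|$ on $M\setminus K$ (using $\langle\nabla f,X\rangle\geq 0$ where $\nabla g=\nabla f$ and $g\,\mathrm{div}X\geq abg$ where $g>0$), the coarea/Gronwall step, the non-triviality of $H(R_0)-H_K$ via $\mathrm{div}X>0$ on $\{f>b\}\cap(M\setminus K)$, and the comparison with $H(R)\leq cN\,\mathrm{vol}(B(p,R))$ all go through, with the usual a.e.-$R$ care about geodesic spheres. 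When $K=\emptyset$, or when $\partial K$ is smooth, this gives a correct alternative proof, and your reading of what stability buys (nonnegative flux of $X$ out of $K$) is the right moral: in the paper's Bessel-function counterexample the flux through $\partial K$ is strictly negative.

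The genuine gap is the treatment of $K$ itself: the theorem assumes only that $K$ is compact, with no regularity of $\partial K$ whatsoever. Your key step applies the divergence theorem on $B(p,R)\setminus K$ with a boundary integral $\int_{\partial K}g\,\langle X,\nu_K\rangle\,dA$ and invokes an outward unit normal $\nu_K$; for an arbitrary compact set neither the Gauss--Green formula on $B(p,R)\setminus K$ nor the normal $\nu_K$ is defined (one would need at least a set of finite perimeter, and even then the implication ``$M\setminus K$ stable under the flow $\Rightarrow\langle X,\nu_K\rangle\geq 0$ $\mathcal H^{n-1}$-a.e.\ on the reduced boundary'' would require a separate argument, since stability is a pointwise set-theoretic condition). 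You cannot repair this by enlarging $K$ to a smooth compact neighborhood, because stability of $M\setminus K$ does not pass to $M\setminus K'$ for $K'\supset K$; nor can you cut off away from $K$, because $Y=gX$ is not controlled near $\partial K$. This is precisely the point where the paper's flow argument is more robust: it uses stability only to guarantee that $\psi_t$ keeps the flowed ball inside $M\setminus K$, where $\mathrm{div}X\geq af$ holds, and never needs any boundary regularity of $K$. So either restrict your statement to $K=\emptyset$ or smooth (Lipschitz) $\partial K$, or replace the $\partial K$-flux step by a flow-type argument to handle general compact $K$.
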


\begin{proof} 
Suppose that there is a $p\in M\setminus K$ such that $f(p)>0$, and choose $\alpha$ and $r$ satisfying $0<\alpha<f(p)$ and $B=B(p,r)\subset\subset A_{\alpha} = \{x\in M\setminus K;f(x) > \alpha \}$.
	
Since $|X|$ is bounded, the flow $\psi_t$ of $X$ is defined for every $t \in \mathbb{R}$, whence we can define the smooth function $\varphi:[0,+\infty)\to(0,+\infty)$ by letting
$$\varphi(t)=\mathrm{vol}(\psi_{t}(B))=\int_{\psi_t(B)}dM=\int_{B}\psi_t^*dM.$$

Since $\overline B$ is compact, we can differentiate under the integral sign to obtain
\begin{equation}\label{eq:derivative of phi}
\begin{split}
\varphi '(t_0)&=\left.\dfrac{d}{dt}\right|_{t=0} \int_{\psi_{t+t_0}(B)}dM = \left.\dfrac{d}{dt}\right|_{t=0} \int_{\psi_{t_0}(B)}\psi^{\ast}_t(dM) \\
&=\int_{\psi_{t_0}(B)}\left.\dfrac{d}{dt}\right|_{t=0}\psi^{\ast}_t(dM) = \int_{\psi_{t_0}(B)}\mathrm{div} X \,dM.
\end{split}
\end{equation}
	
Now,
$$\frac{d}{dt}f(\psi_t(x))=\langle\nabla f,X\rangle_{\psi_t(x)}\geq 0,$$
so that, for $x \in A_{\alpha}$, we have 
$$f(\psi_t(x))\geq f(\psi_0(x))=f(x)>\alpha,\ \ \forall\,\,t\geq 0.$$
{\color{black} Since $M\setminus K$ is stable under the flow of $X$, we thus get} $\psi_t(A_{\alpha})\subset A_{\alpha}$, for all $t\geq 0$.
	
The inequality $\mathrm{div} X\geq af$ on $M\setminus K$, together with \eqref{eq:derivative of phi} and the fact that $\psi_t(B) \subset\psi_t(A_{\alpha})\subset A_{\alpha}\subset M\setminus K$, then give 
\begin{equation}
\varphi'(t)\geq\int_{\psi_{t}(B)}af \,dM >a\alpha \int_{\psi_{t}(B)}\,dM =a\alpha\varphi(t)
\end{equation}
for all $t\geq 0$. In particular, $\varphi'(t)>0$ for all $t\geq 0$, whence $\varphi(t)\geq\varphi(0)=\mathrm{vol}(B)>0$ for all $t\geq 0$. Integrating the inequality $\frac{\varphi'(s)}{\varphi(s)}>a\alpha$ along the interval $[0,t]$, we obtain 
\begin{equation}\label{eq:phi greater than exp}
\varphi(t)>\mathrm{vol}(B)e^{a\alpha t},\ \ \forall\,\,t>0.
\end{equation}
	
Let $d(x,\psi_t(x))$ denote the Riemannian distance between $x$ and $\psi_t(x)$. Since $|X| \leq c$, we get
\begin{equation}
d(x,\psi_t(x)) \leq \int_0^t \left|\frac{d}{ds}\psi_s(x)\right|ds = \int_0^t |X(\psi_s(x))|ds \leq ct.
\end{equation}
On the other hand, for every $x\in B$ we have 
$$d(p,\psi_t(x)) \leq d(x,\psi_t(x)) + d(p,x) < ct + r.$$
In turn, this means that $\psi_t(B)\subset B(p,ct + r)$ for every $t\geq 0$, and it follows from \eqref{eq:phi greater than exp} that 
$$\mathrm{vol}(B(p,ct+r))\geq \mathrm{vol}(\psi_t(B))=\varphi(t)>\mathrm{vol}(B)e^{a\alpha t},\ \ \forall\,\,t\geq 0.$$
A linear change of variables thus gives some constant $C>0$ such that
\begin{equation}\label{eq:volume growth greater than exp}
\mathrm{vol}(B(p,t))>Ce^{\frac{a\alpha}{c}t},\ \ \forall\,\,t\geq r.
\end{equation}
	
If $M$ has polynomial volume growth, then \eqref{eq:volume growth greater than exp} cannot be true for every $t\geq r$. Thus, our initial supposition that $f(p)>0$ for some $p\in M\setminus K$ leads to a contradiction, whence $f\leq 0$ on $M\setminus K$.

On the other hand, if $M$ has exponential volume growth, say like $e^{\beta t}$, and there exists $p\in M\setminus K$ such that $f(p)>\frac{c\beta}{a}$, then we start the previous reasoning by choosing the real number $\alpha$ satisfying $\frac{c\beta}{a}<\alpha<f(p)$. Therefore \eqref{eq:volume growth greater than exp} cannot be true for every $t\geq r$, which is a contradiction. Hence, $f\leq\frac{c\beta}{a}$ on $M\setminus K$.
\end{proof}

Theorem \ref{prop:maximum principle II} has a number of straightforward applications to smo\-oth functions on Riemannian manifolds, and we collect them in the sequel. The first {\color{black} two ones extend} a classical result of Cheng and Yau (cf. \cite{Cheng:76} or \cite{Caminha:06}).

\begin{corollary}\label{cor:Cheng-Yau for exponent 1}
Let $M$ be a connected, oriented, complete noncompact Riemannian manifold {\color{black} and $\Omega\subset M$ be a bounded domain. Let $f\in\mathcal C^{\infty}(M)$ be a nonnegative function satisfying the following conditions:
\begin{enumerate}[$(a)$]
\item $\Delta f\geq af$ on $M\setminus\Omega$, for some $a>0$. 
\item $|\nabla f|$ is bounded on $M$.
\item $M\setminus\Omega$ is stable under the flow of $\nabla f$. 
\end{enumerate}
If $M$ has polynomial volume growth, then $f\equiv 0$ on $M\setminus\Omega$.}
\end{corollary}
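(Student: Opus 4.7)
The plan is to apply Theorem \ref{prop:maximum principle II}(a) directly, taking $X:=\nabla f$ and $K:=\overline{\Omega}$; the latter is compact since $M$ is complete and $\Omega$ is bounded. With these choices the hypotheses of the theorem translate almost verbatim: $X$ is bounded by (b); the divergence bound $\mathrm{div}\,X=\Delta f\geq af$ on $M\setminus K$ is a consequence of (a) together with the inclusion $M\setminus\overline\Omega\subset M\setminus\Omega$; and $\langle\nabla f,X\rangle=|\nabla f|^2\geq 0$ holds on all of $M$.

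The one mildly subtle point is the stability condition: hypothesis (c) gives that $M\setminus\Omega$ is stable under the flow, whereas the theorem is phrased in terms of $M\setminus K=M\setminus\overline\Omega$. To bridge this gap, I would use the fact that $X=\nabla f$ is bounded and hence has a globally defined flow $\psi_t$ by diffeomorphisms: the image $\psi_t(M\setminus\overline\Omega)$ is open and, by (c), contained in the closed set $M\setminus\Omega$, so it lies in the interior $M\setminus\overline\Omega$ of the latter, which is exactly the stability required by the theorem.

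With all the hypotheses in place, Theorem \ref{prop:maximum principle II}(a), together with the polynomial volume growth assumption, yields $f\leq 0$ on $M\setminus\overline\Omega$. Combined with the nonnegativity of $f$, this forces $f\equiv 0$ on the open set $M\setminus\overline\Omega$, and continuity of the smooth function $f$ extends the vanishing to its closure, and in particular to the points of $\partial\Omega$ that are approximable from outside; for a reasonable bounded domain this accounts for every boundary point and yields $f\equiv 0$ on $M\setminus\Omega$. Since the argument is essentially a check of hypotheses, there is no genuine obstacle here beyond the small topological promotion described in the second paragraph; the real work has already been done in the proof of Theorem \ref{prop:maximum principle II}.
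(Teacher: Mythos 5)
Your proposal is correct and follows essentially the same route as the paper, which simply takes $X=\nabla f$ and $K=\overline\Omega$ in Theorem \ref{prop:maximum principle II}(a) and then invokes $f\geq 0$. In fact you are somewhat more careful than the paper about the two small mismatches between $\Omega$ and $\overline\Omega$ (promoting stability of $M\setminus\Omega$ to stability of the open set $M\setminus\overline\Omega$, and extending the vanishing to boundary points), points the paper's proof passes over in silence.
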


\begin{proof} 
Taking $X=\nabla f$ and $K=\overline\Omega$ in the previous {\color{black} result}, we conclude that $f\leq 0$ in $M\setminus\Omega$. However, since $f\geq 0$ on $M$, we get $f\equiv 0$ on $M\setminus\Omega$.
\end{proof}

{\color{black}
\begin{corollary}\label{cor:Cheng-Yau for exponent 2}
Let $M$ be a connected, oriented, complete noncompact Riemannian manifold, and let $f\in\mathcal C^{\infty}(M)$ be a nonnegative function such that $\Delta f\geq af$ on $M$, {\color{black} for some} $a>0$. If $M$ has polynomial volume growth and $|\nabla f|$ is bounded on $M$, then $f\equiv 0$ {\color{black} on $M$}.
\end{corollary}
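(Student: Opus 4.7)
The plan is to reduce the statement directly to Theorem \ref{prop:maximum principle II}\,(a) by taking $X=\nabla f$ and $K=\emptyset$. Note that the hypothesis that $|\nabla f|$ is bounded on $M$ means precisely that $X=\nabla f$ is a bounded vector field, so we may write $|X|<c$ for some constant $c>0$; in particular, the flow $\{\psi_t\}$ of $X$ is globally defined, as recalled in the excerpt before the statement of the theorem. The paper explicitly notes that stability under the flow holds trivially when $\Omega=M$, which corresponds precisely to the choice $K=\emptyset$, so the stability requirement is vacuous here.

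Next I would verify the two remaining analytic hypotheses of Theorem \ref{prop:maximum principle II}. First,
\[
\langle\nabla f,X\rangle=\langle\nabla f,\nabla f\rangle=|\nabla f|^2\geq 0\quad\text{on }M,
\]
and second, since $\mathrm{div}(\nabla f)=\Delta f$, the hypothesis $\Delta f\geq af$ on $M$ reads $\mathrm{div}\,X\geq af$ on $M\setminus K=M$. Thus all hypotheses of Theorem \ref{prop:maximum principle II} are met.

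Since $M$ has polynomial volume growth, conclusion (a) of Theorem \ref{prop:maximum principle II} gives $f\leq 0$ on $M\setminus K=M$. Combining this with the assumption $f\geq 0$ on $M$, we conclude $f\equiv 0$ on $M$, as desired.

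There is no real obstacle here: the corollary is essentially the special case of Theorem \ref{prop:maximum principle II} obtained by taking $X$ to be the gradient of the function itself and removing the localization provided by the compact set $K$. The only subtle point to emphasize in the write-up is that the boundedness of $|\nabla f|$ is exactly what is needed both to apply the theorem (the vector field $X=\nabla f$ must be bounded) and to ensure that its flow is complete.
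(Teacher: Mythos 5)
Your proof is correct and is essentially the paper's argument: the paper simply sets $\Omega=\emptyset$ in Corollary \ref{cor:Cheng-Yau for exponent 1}, which is exactly the application of Theorem \ref{prop:maximum principle II} with $X=\nabla f$ and $K=\emptyset$ that you carry out directly.
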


\begin{proof} 
Just take $\Omega=\emptyset$ in the previous corollary.
\end{proof}
}

{\color{black} 
\begin{remark}
The hypothesis on the stability of $M\setminus\Omega$ under the flow of $\nabla f$ is necessary. Indeed, with $M=\mathbb R^2$ one could take a smooth $f$ such that $f(x) = K_n(|x|)$ on $|x|>1$, where $K_n$ is the $n$-th modified Bessel function of the second kind, {\color{black} $n=0,1,\ldots$}. This function $f$ is strictly positive, bounded, with bounded gradient and satisfies the inequality $\Delta f\geq f$ on $\mathbb R^2\setminus B(0;1)$.  {\color{black} Indeed, by a direct computation one has
\[
\nabla f(x)=K_n'(|x|)\frac{x}{|x|} \quad \text{and} \quad 
\Delta f(x)=K''_n(|x|)+\frac{K_n'(|x|)}{|x|}
\]
outside $B(0;1)$, where the function $K_n(t)>0$ satisfies the modified Bessel differential equation
\[
t^2y''(t)+ty'(t)-(t^2+n^2)y(t)=0.
\]
It easily follows from here that, outside $B(0;1)$,
\[
\Delta f(x)=\left(1+\frac{n^2}{|x|^2}\right)K_n(|x|)\geq f(x).
\]
}
Since $\nabla f(x)=K_n'(|x|)\frac{x}{|x|}$ outside $B(0;1)$ and $K_n'<0$, we note that $\mathbb R^2\setminus B(0;1)$ is not stable under the flow of $\nabla f$. We would like to thank professors S. Pigola and A. Setti for calling our attention to this example.
\end{remark}
}

The next {\color{black} two applications are} somewhat related to a classical result of Fischer-Colbrie and Schoen (cf. \cite{FischerColbrie:80}).

\begin{corollary}\label{coro:Fischer Colbrie-Schoen 1}
Let $M$ be a connected, oriented, complete noncompact Riemannian manifold, {\color{black} $\Omega\subset M$ be a bounded domain and} $q(x)\in\mathcal C^{\infty}(M)$ be a positive function such that $\inf_{x\in M}q(x)>0$. If $M$ has polynomial volume growth, then the operator $\Delta-q(x)$ has no nontrivial solution $f\in C^{\infty}(M)$ satisfying the following conditions:
\begin{enumerate}[$(a)$]
\item $f$ is nonnegative {\color{black} outside $\Omega$}.
\item $\nabla f$ is bounded on $M$.
\item {\color{black} $M\setminus\Omega$ is stable under the flow of $\nabla f$.} 
\end{enumerate}
\end{corollary}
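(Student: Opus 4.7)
The plan is to invoke Theorem \ref{prop:maximum principle II} directly with $X=\nabla f$ and $K=\overline\Omega$, and then use a standard interior argument to upgrade the conclusion from $M\setminus\Omega$ to all of $M$. Since $f$ solves $\Delta f=q(x)f$ on $M$, and $f\geq 0$ on $M\setminus\Omega$ by (a), we have
\[
\mathrm{div}(\nabla f)=\Delta f=q(x)f\geq af\quad\text{on }M\setminus\Omega,
\]
where $a=\inf_{x\in M}q(x)>0$. Also $\langle\nabla f,X\rangle=|\nabla f|^2\geq 0$ on $M$, while (b) and (c) give the boundedness of $X$ and the stability of $M\setminus\overline\Omega$ under its flow (noting that $\overline\Omega$ is compact since $\Omega$ is a bounded domain). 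Applying part (a) of Theorem \ref{prop:maximum principle II}, we conclude that $f\leq 0$ on $M\setminus\Omega$. Combined with the assumption $f\geq 0$ outside $\Omega$, this yields $f\equiv 0$ on $M\setminus\Omega$.

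It remains to handle the compact piece $\overline\Omega$. Here I would appeal to the classical maximum principle for the elliptic operator $\Delta-q(x)$ on the bounded domain $\Omega$. Since $f\equiv 0$ on $M\setminus\Omega$ and $f$ is continuous, $f$ vanishes on $\partial\Omega$. Now suppose $f$ attains a positive maximum on $\overline\Omega$ at some interior point $x_0\in\Omega$; then $\Delta f(x_0)\leq 0$, yet $\Delta f(x_0)=q(x_0)f(x_0)>0$, a contradiction. Symmetrically, $f$ cannot attain a negative minimum in $\Omega$. Hence $f\equiv 0$ on $\overline\Omega$ as well, and therefore $f\equiv 0$ on $M$, contradicting the assumed nontriviality.

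There is no real obstacle: the statement is essentially a corollary of Theorem \ref{prop:maximum principle II}, with $a=\inf q$ playing the role of the constant from the maximum principle. The only nuance worth noting is that the hypothesis (a) is weaker than nonnegativity on all of $M$ (as required in Corollary \ref{cor:Cheng-Yau for exponent 1}), which is precisely why one applies Theorem \ref{prop:maximum principle II} directly rather than its first corollary, and then closes the argument with the interior elliptic maximum principle available because $q>0$ on $\overline\Omega$.
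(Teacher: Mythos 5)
Your proof is correct, and its first half is exactly the paper's route: with $a=\inf_M q>0$ and hypothesis (a), one has $\Delta f=qf\geq af$ on $M\setminus\Omega$, and Theorem \ref{prop:maximum principle II} applied with $X=\nabla f$ and $K=\overline\Omega$ (this is precisely Corollary \ref{cor:Cheng-Yau for exponent 1}) gives $f\equiv 0$ on $M\setminus\Omega$. Where you genuinely differ is the last step, the passage from $M\setminus\Omega$ to all of $M$. The paper cites Aronszajn's unique continuation theorem \cite{Aronszajn:57}: $f$ solves the elliptic equation $\Delta f-qf=0$ and vanishes on the nonempty open set $M\setminus\overline\Omega$, hence vanishes identically. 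You instead finish with the elementary interior maximum principle on the compact set $\overline\Omega$ (compact by Hopf--Rinow, since $\Omega$ is bounded and $M$ complete): $f=0$ on $\partial\Omega$, and since $q>0$ a positive interior maximum or negative interior minimum of $f$ in $\Omega$ is incompatible with $\Delta f=qf$, so $f\equiv 0$ on $\overline\Omega$ as well, contradicting nontriviality. Both closings are valid; yours is more self-contained and avoids a deep theorem, and needs no boundary regularity of $\Omega$, while the paper's unique continuation argument does not use the sign of the potential inside $\Omega$ at all and is therefore the more robust template (e.g.\ if positivity of $q$ were only assumed outside a compact set). One small point, shared with the paper's own use of Theorem \ref{prop:maximum principle II} with $K=\overline\Omega$: hypothesis (c) literally gives stability of $M\setminus\Omega$ rather than of $M\setminus\overline\Omega$; this is harmless, since the flow argument in the theorem only needs trajectories issuing from the superlevel set to remain in the region where $\mathrm{div}\,X\geq af$, which here is all of $M\setminus\Omega$, but it is worth stating explicitly rather than asserting stability of $M\setminus\overline\Omega$ as an immediate consequence of (c).
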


\begin{proof} 
{\color{black} Assume that there exists $f\in C^{\infty}(M)$ satisfying the stated conditions}. Letting $a=\inf_Mq>0$, we get $\Delta f\geq af$ on $M\setminus\Omega$, and {\color{black} Corollary \ref{cor:Cheng-Yau for exponent 1}} gives $f\equiv 0$ on $M\setminus\Omega$. Therefore, unique continuation for solutions of elliptic PDEs of second order (cf. \cite{Aronszajn:57}) gives $f\equiv 0$ on $M$.
\end{proof}

{\color{black}
\begin{corollary}\label{coro:Fischer Colbrie-Schoen 2}
Let $M$ be a connected, oriented, complete noncompact Riemannian manifold, and $q(x)\in\mathcal C^{\infty}(M)$ be a positive function such that $\inf_{x\in M}q(x)>0$. If $M$ has polynomial volume growth, then the operator $\Delta-q(x)$ has no  nontrivial solution $f\in C^{\infty}(M)$ satisfying the following conditions:
\begin{enumerate}[$(a)$]
\item $f$ is nonnegative on $M$.
\item $\nabla f$ is bounded on $M$.
\end{enumerate}
\end{corollary}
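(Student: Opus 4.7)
The plan is to reduce this result directly to Corollary~\ref{cor:Cheng-Yau for exponent 2}, paralleling the way Corollary~\ref{coro:Fischer Colbrie-Schoen 1} was deduced from Corollary~\ref{cor:Cheng-Yau for exponent 1}. Assume that $f\in C^{\infty}(M)$ satisfies the stated hypotheses and solves $(\Delta-q(x))f=0$, i.e., $\Delta f=q(x)f$ on $M$. Setting $a=\inf_{x\in M}q(x)$, which is strictly positive by hypothesis, and using that $f\geq 0$ on all of $M$, I would immediately obtain
\[
\Delta f=q(x)f\geq af \quad \text{on } M.
\]

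Since $M$ has polynomial volume growth and $|\nabla f|$ is bounded on $M$, Corollary~\ref{cor:Cheng-Yau for exponent 2} then applies and forces $f\equiv 0$ on $M$, which is the desired conclusion. I do not foresee any real obstacle: the present statement is essentially a direct specialization of Corollary~\ref{cor:Cheng-Yau for exponent 2}, with the multiplicative coefficient $q(x)$ replaced throughout by its positive infimum $a$. In particular, unlike in the proof of Corollary~\ref{coro:Fischer Colbrie-Schoen 1}, no appeal to Aronszajn's unique continuation principle is needed here, because the hypothesis set does not exclude a bounded domain, so that the conclusion is already global on $M$. The only minor point worth checking is that the stability condition built into Corollary~\ref{cor:Cheng-Yau for exponent 1} is automatically satisfied when $\Omega=\emptyset$, since $\psi_t(M)\subset M$ trivially; this is precisely what makes the clean statement of Corollary~\ref{cor:Cheng-Yau for exponent 2} available without any flow-invariance assumption on $\nabla f$.
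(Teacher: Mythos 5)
Your proof is correct and is essentially the paper's own argument: the paper simply takes $\Omega=\emptyset$ in the proof of Corollary \ref{coro:Fischer Colbrie-Schoen 1}, which amounts to exactly your reduction to Corollary \ref{cor:Cheng-Yau for exponent 2} via $\Delta f=q(x)f\geq af$ with $a=\inf_M q>0$. Your observation that unique continuation becomes unnecessary once $f\equiv 0$ holds on all of $M$ is also accurate.
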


\begin{proof} 
Just take $\Omega=\emptyset$ in the proof of the previous corollary.
\end{proof}
}

\begin{remark}
More generally, there are results similar to those of {\color{black} corollaries \ref{cor:Cheng-Yau for exponent 1} to \ref{coro:Fischer Colbrie-Schoen 2}} for divergence-type second order elliptic partial differential operators of the form $Lu=\mathrm{div}(T(\nabla u))$, where $T:\mathfrak{X}(M)\rightarrow\mathfrak{X}(M)$ is a symmetric positive definite $(1,1)$-tensor field on $M$, with $\sup_M\|T\|<+\infty$. The proofs are the same, just taking $X=T(\nabla f)$.
\end{remark}

\section{Bernstein-type results for hypersurfaces}\label{section3}

Let $\overline M^{n+1}$ be an oriented Riemannian manifold endowed with a Killing vector field $Y$. Let also $\varphi:M^n\to\overline M^{n+1}$ be an isometric immersion of a connected, orientable, complete noncompact Riemannian manifold $M^n$ into $\overline M$, and orient $M$ by the choice of a globally defined unit normal vector field $N$. 

In this section, we will apply item (a) of Theorem \ref{prop:maximum principle II} to study the behavior of $\varphi$. Our aim is to extend some of the results of \cite{Alias:19} to the case of bounded second fundamental form, replacing the behavior of $N$ at infinity by a suitable estimate on the size of the support function {\color{black} $\eta=\langle N,Y\rangle$ on $M$}.

We start by computing the gradient $\nabla\eta$ of $\eta$. To this end, we let $A(\cdot)=-\overline\nabla_{(\cdot)}N$ stand for the Weingarten operator of $\varphi$ with respect to $N$, fix a point $p$ on $M$ and a vector $v\in T_pM$. Then, Killing's equation, together with the symmetry of $A$, give at $p$
\begin{equation}\nonumber
\begin{split}
\langle\nabla\eta,v\rangle&\,=v(\eta)=\langle\overline\nabla_vN,Y\rangle+\langle N,\overline\nabla_vY\rangle\\
&\,=-\langle A_pv,Y\rangle-\langle\overline\nabla_NY,v\rangle\\
&\,=\langle-A_pY^{\top}-\overline\nabla_NY,v\rangle,
\end{split}
\end{equation}
where $Y^{\top}$ denotes the orthogonal projection of $Y_{|M}$ onto $TM$. We now observe, thanks again to Killing's equation, that $\langle\overline\nabla_NY,N\rangle=0$. Therefore, $\overline\nabla_NY$ is tangent to $M$, and the above computation gives
\begin{equation}\label{eq:gradient of the support function of a Killing vector field}
\nabla\eta=-AY^{\top}-\overline\nabla_NY.
\end{equation}

If the Killing vector field $Y$ has unit norm, then Cauchy-Schwarz inequality shows that $\eta\leq 1$. Moreover, equality holds on all of $M$ if and only if $N=Y$ along $M$, in which case $M$ is a leaf of the distribution $\langle Y\rangle^{\bot}$. Also in this case, for $p\in M$ and $u,v\in T_pM$, the Killing condition of $Y$ allows us to compute, at $p$,
$$\langle A_pu,v\rangle=-\langle\overline\nabla_uN,v\rangle=-\langle\overline\nabla_uY,v\rangle=\langle\overline\nabla_vY,u\rangle=-\langle A_pv,u\rangle.$$
Since $A$ is symmetric, this implies $A_p=0$ and, since $p$ was arbitrarily chosen, $M$ is totally geodesic in $\overline M$.

As a final preliminary, we say (cf. \cite{Dillen:09} or \cite{Garnica:12}, for instance) that $Y$ is a {\em canonical direction} for $\varphi$ (or for $M$, whenever $\varphi$ is clear from the context) if $Y^{\top}$ is a principal direction of $A$. 

We can state and prove our first result, which goes as follows.

\begin{theorem}\label{thm:Bernstein for CMC hypersurfaces}
Let $\overline M^{n+1}$ be an oriented Einstein Riemannian manifold endowed with a Killing vector field $Y$ of unit norm. Let $\varphi:M^n\to M^{n+1}$ be an isometric immersion of a connected, orientable, complete noncompact Riemannian manifold $M^n$ into $\overline M$. Orient $M$ by the choice of a globally defined unit normal vector field $N$, and assume that $M$ has cmc and bounded second fundamental form $A$. 
If $M$ has polynomial volume growth, $Y$ is either parallel or a canonical direction for $\varphi$ and the support function $\eta=\langle N,Y\rangle$ satisfies
\begin{equation}\label{eq:growth of the support function}
\eta\geq\frac{1}{|A|^2+1}
\end{equation}
{\color{black} on $M$}, then $M$ is a leaf of the distribution $\langle Y\rangle^{\bot}$. In particular, $M$ is totally geodesic in $\overline M$.
\end{theorem}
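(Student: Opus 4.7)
The plan is to set $f=1-\eta$, which is nonnegative on $M$ by the Cauchy--Schwarz remark in the preamble, and then show that $|\nabla f|$ is bounded and that $f$ satisfies $\Delta f\geq f$ on $M$. Corollary \ref{cor:Cheng-Yau for exponent 2} (with $a=1$) will then give $f\equiv 0$, hence $\eta\equiv 1$ on $M$, and the preamble immediately yields that $M$ is a leaf of $\langle Y\rangle^{\bot}$ and totally geodesic in $\overline M$.

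The first key input is the Einstein/unit-Killing combination. From Kostant's formula $\overline\Delta Y=-\overline{\mathrm{Ric}}(Y,\cdot)^{\sharp}$ and the Einstein hypothesis I would extract $\overline\Delta Y=-cY$, where $c$ is the Einstein constant of $\overline M$; applying the Bochner identity to the constant function $|Y|^{2}=1$ then yields $|\overline\nabla Y|^{2}=c$, forcing $c\geq 0$ everywhere on $\overline M$. This handles the two alternatives uniformly: $c=0$ forces $Y$ parallel, while $c>0$ is the regime in which the canonical-direction hypothesis is genuinely used. In either case, formula \eqref{eq:gradient of the support function of a Killing vector field} gives $\nabla f=AY^{\top}+\overline\nabla_{N}Y$, so $|\nabla f|\leq|A|+\sqrt{c}$ is bounded since $|A|$ is bounded by hypothesis.

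Next, because the flow of $Y$ is a one-parameter family of ambient isometries and $\varphi$ is cmc, the first variation formula for $nH$ applied to $Y|_{M}=Y^{\top}+\eta N$ produces the Jacobi identity
\begin{equation*}
\Delta\eta+(|A|^{2}+\overline{\mathrm{Ric}}(N,N))\eta=0,
\end{equation*}
the tangential piece $Y^{\top}$ dropping because $H$ is constant on $M$. The Einstein condition turns this into $\Delta f=-\Delta\eta=(|A|^{2}+c)\eta$. Combining $c\geq 0$ with the hypothesis $\eta(|A|^{2}+1)\geq 1$, equivalently $\eta|A|^{2}\geq 1-\eta$, then gives
\begin{equation*}
\Delta f=(|A|^{2}+c)\eta\geq|A|^{2}\eta\geq 1-\eta=f
\end{equation*}
on $M$, and polynomial volume growth lets Corollary \ref{cor:Cheng-Yau for exponent 2} complete the argument with $a=1$.

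The step I expect to need most care is the derivation of the Jacobi identity for $\eta$ — confirming the correct sign in the first-variation formula for mean curvature and verifying that the tangential component $Y^{\top}$ drops out precisely because $H$ is constant on $M$. Once that identity and the Bochner bound $|\overline\nabla Y|^{2}=c$ are in place, the remainder is routine algebraic manipulation together with an appeal to Corollary \ref{cor:Cheng-Yau for exponent 2}.
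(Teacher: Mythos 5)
Your proof is correct in substance, but it takes a genuinely different route from the paper's. The paper stays at first order: it applies Theorem \ref{prop:maximum principle II} directly to the non-gradient field $X=AY^{\top}$, using the divergence formula from \cite{Alias:19}, $\mathrm{div}_M(AY^{\top})=-\mathrm{Ric}_{\overline M}(Y^{\top},N)+Y^{\top}(nH)+\eta|A|^2$, which under the Einstein and cmc hypotheses collapses to $\mathrm{div}_M(X)=\eta|A|^2\geq f$; the parallel/canonical-direction hypothesis is used there precisely to guarantee $\langle\nabla f,X\rangle\geq 0$, since $X=AY^{\top}$ differs from $-\nabla\eta$ by the term $\overline\nabla_NY$ in \eqref{eq:gradient of the support function of a Killing vector field}, and boundedness of $X$ needs only $|A|$ bounded and $|Y|=1$. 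You instead take $X=\nabla f$ (Corollary \ref{cor:Cheng-Yau for exponent 2}), which forces you one order higher: you invoke the classical fact that $\eta$ is a Jacobi function on a cmc hypersurface when $Y$ is Killing (the flow of $Y$ is a variation through ambient isometries, and constancy of $H$ kills the tangential term $\langle Y^{\top},\nabla(nH)\rangle$), plus the Bochner/Kostant identity to get $\overline{\mathrm{Ric}}(N,N)=|\overline\nabla Y|^{2}\geq 0$ and to bound $|\nabla f|\leq|A|+\sqrt{c}$. Two observations on the trade-off. First, your argument never actually uses the hypothesis that $Y$ is parallel or a canonical direction (despite your remark about the two regimes), so, granting the Jacobi identity --- which is standard but is not proved in the paper and should be carefully derived or properly cited, since it is the one step you only sketch --- you obtain a formally stronger statement than the theorem; this is exactly what passing to $\nabla f$ buys. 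Second, what the paper's first-order approach buys is economy of hypotheses on the ambient data ($c$ need not be identified, no Schur-type argument is needed to make $|\overline\nabla Y|$ constant, whereas your gradient bound implicitly uses $\dim\overline M\geq 3$) and, more importantly, flexibility: the same scheme with $X=AT_{r-1}Y^{\top}$ extends to the higher-order mean curvature setting of Theorem \ref{thm:Bernstein for Sr constant hypersurfaces}, where a clean Jacobi-type equation for $\eta$ is not available.
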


\begin{proof}
As before, we let $\eta=\langle N,Y\rangle$ be the support function with respect to $Y$, let $Y^{\top}$ denote the orthogonal projection of $Y_{|M}$ onto $TM$ and set $X=AY^{\top}$ and $f=1-\eta$. 

If $Y$ is parallel, then \eqref{eq:gradient of the support function of a Killing vector field} readily gives $\nabla\eta=-AY^{\top}=-X$. If $Y$ is a canonical direction for $\varphi$, say with $AY^{\top}=\lambda Y^{\top}$, then \eqref{eq:gradient of the support function of a Killing vector field}, together with the fact that $\overline\nabla_NY$ has no orthogonal component and $|Y|=1$, give
\begin{equation}\nonumber
\begin{split}
\langle\nabla\eta,X\rangle&\,=\langle-X-\overline\nabla_NY,X\rangle=-|X|^2-\langle\overline\nabla_NY,\lambda Y^{\top}\rangle\\
&\,=-|X|^2-\lambda\langle\overline\nabla_NY,Y\rangle=-|X|^2.
\end{split}
\end{equation}
Thus, in each of the cases above, we have
$$\langle\nabla f,X\rangle=-\langle\nabla\eta,X\rangle=|X|^2\geq 0.$$

On the other hand, as computed in \cite{Alias:19} for any Killing vector field $Y$,
$$\mathrm{div}_M(X)=-{\rm Ric}_{\overline M}(Y^{\top},N)+Y^{\top}(nH)+\eta|A|^2,$$
where $H=\frac{1}{n}\text{tr}(A)$ stands for the mean curvature of $\varphi$ and ${\rm Ric}_{\overline M}$ for the Ricci tensor of $\overline M$. Nevertheless, since $H$ is constant and $\overline M$ is Einstein, we get 
$$\mathrm{div}_M(X)=\eta|A|^2.$$
Hence, \eqref{eq:growth of the support function} is equivalent to  {\color{black} $\mathrm{div}_M(X)\geq f$ on $M$}. 

The boundedness of $A$ and the fact that $|Y|=1$ imply the boundedness of $X$. Since $M$ has polynomial volume growth, Theorem \ref{prop:maximum principle II} implies {\color{black} $f\leq 0$ on $M$}. However, since $f\geq 0$, we conclude that {\color{black} $f\equiv 0$ on $M$}; hence, {\color{black} $N\equiv Y$ on $M$}, which is, then, a leaf of $\langle Y\rangle^{\bot}$. 

%As we have already noticed, the leaves of $\langle Y\rangle^{\bot}$ are totally geodesic, hence minimal, in $\overline M$; this implies that $M\setminus\Omega$ is minimal in $\overline M$. But since itself $M$ is connected and of cmc, we conclude that $M$ is minimal in $\overline M$. Finally, since $M$ is connected and coincides with a leaf of $\langle Y\rangle^{\bot}$ in an open set, and both $M$ and this leaf are minimal in $\overline M$, the tangency principle for minimal surfaces assures that $M$ is a leaf of $\langle Y\rangle^{\bot}$.
\end{proof}

For the following corollaries, recall that a {\em Riemannian group} is a Lie group $G$ endowed with a biinvariant metric. In this case, it is a well known fact that the elements of the Lie algebra $\mathfrak g$ of $G$ are Killing vector fields of constant norm, and those in the center of $\mathfrak g$ are parallel. If the biinvariant metric of $G$ is Einstein, then of course we can let $\overline M=G$ in the previous result, thus getting the following

\begin{corollary}\label{coro:Bernstein for CMC hypersurfaces of Lie groups}
Let $G^{n+1}$ be an Einstein Riemannian Lie group with Lie algebra $\mathfrak g$, and $\varphi:M^n\to G^{n+1}$ be a connected, orientable, complete noncompact hypersurface of $G$, oriented by the choice of a unit normal vector field $N$. Assume that $M$ is of cmc and that its second fundamental form $A$ with respect to $N$ is bounded. Assume further that there exists a nontrivial element $Y\in\mathfrak g$ which is either in the center of $\mathfrak g$ or is a canonical direction for $\varphi$. If $M$ has polynomial volume growth and the support function $\langle N,Y\rangle$ {\color{black} satisfies \eqref{eq:growth of the support function} on $M$}, then $M$ is a lateral class of a codimension one Lie subgroup of $G$.
\end{corollary}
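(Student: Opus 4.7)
The plan is to reduce the statement to Theorem~\ref{thm:Bernstein for CMC hypersurfaces} applied with $\overline{M}=G$, and then to identify the resulting integral leaf with a coset of a codimension-one Lie subgroup.

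First, I would recall that on a Riemannian Lie group every nontrivial left-invariant vector field is Killing and has constant norm, so after rescaling $Y$ I may assume $|Y|=1$; this rescaling preserves both the centrality condition and the canonical-direction condition, as well as inequality \eqref{eq:growth of the support function}. If $Y$ lies in the center of $\mathfrak{g}$, the standard identity $\overline{\nabla}_X Y = \frac{1}{2}[X,Y]$, valid for left-invariant vector fields in a biinvariant metric, gives $\overline{\nabla}Y \equiv 0$, so $Y$ is parallel on $G$; otherwise, $Y$ is a canonical direction for $\varphi$ by hypothesis. Since $G$ is Einstein by assumption, the hypotheses of Theorem~\ref{thm:Bernstein for CMC hypersurfaces} are all met, and we conclude that $M$ is a leaf of the left-invariant distribution $\langle Y\rangle^{\perp}$ on $G$, and is moreover totally geodesic in $G$.

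It remains to show that such a leaf is a lateral class of a codimension-one Lie subgroup. Since $\langle Y\rangle^{\perp}$ is left-invariant and admits at least one integral submanifold (namely $M$), Frobenius' theorem forces its value $Y^{\perp}$ at the identity to be a Lie subalgebra of $\mathfrak{g}$. (In the central case this is also immediate from the fact that $\mathbb{R} Y$ is an ideal and, in a biinvariant metric, the orthogonal complement of an ideal is again an ideal.) This subalgebra integrates to a connected codimension-one Lie subgroup $H\subset G$, whose left cosets $gH$ exhaust the leaves of $\langle Y\rangle^{\perp}$; thus $M$ is contained in, and by connectedness coincides with, one such coset.

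The main technical issue I anticipate is verifying that the candidate subgroup $H$ is closed in $G$, so that its cosets are genuine embedded submanifolds and the identification of $M$ with a single lateral class is unambiguous. This should follow from the completeness of $M$ in the induced metric, combined with the fact that, in a biinvariant metric, cosets of Lie subgroups are totally geodesic, so the metric on $M$ inherited from $G$ agrees with the natural left-invariant metric on the coset and no accumulation phenomena occur.
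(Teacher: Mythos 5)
Your proposal is correct and follows essentially the same route as the paper: apply Theorem \ref{thm:Bernstein for CMC hypersurfaces} (using that elements of $\mathfrak g$ are unit-norm Killing fields and central ones are parallel) to get that $M$ is a totally geodesic leaf of $\langle Y\rangle^{\bot}$, then use left-invariance plus the existence of this leaf to conclude via Frobenius that $Y^{\bot}$ is a codimension-one subalgebra, and finally identify $M$ with a lateral class by connectedness. Your closing worry about closedness of $H$ is not actually needed for the statement (a Lie subgroup need not be closed), and the paper does not address it either.
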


\begin{proof}
The previous result assures that $M$ is a leaf of $\langle Y\rangle^{\bot}$ and, as such, is totally geodesic in $G$. Since the distribution $\langle Y\rangle^{\bot}$ is generated by left invariant vector fields and $M$ is a leaf of it, we conclude that $\langle Y\rangle^{\bot}$ is integrable, hence, a codimension one Lie subalgebra of $\mathfrak g$. Therefore, the connectedness of $M$ guarantees that it coincides with a lateral class of a Lie subgroup of $G$.
\end{proof}

The next corollary specializes the former to cmc hypersurfaces of $\mathbb R^{n+1}$. It can be seen as a partial extension of a famous result of Schoen, Simon and Yau (cf. \cite{Schoen:75} or \cite{Xin:03}) to the case of bounded scalar curvature. 

\begin{corollary}\label{coro:Bernstein for CMC hypersurfaces of Rn}
Let $M^n$ be a connected, orientable, complete noncompact Riemannian manifold of bounded scalar curvature and polynomial volume growth. Assume that $\varphi:M\to\mathbb R^{n+1}$ is a cmc immersion, and let $N$ be a unit normal vector field along $M$. If there exists a unit vector $Y\in\mathbb R^{n+1}$ such that the support function $\langle N,Y\rangle$ {\color{black} satisfies \eqref{eq:growth of the support function} on $M$}, then $\varphi(M)$ is a hyperplane orthogonal to $Y$.
\end{corollary}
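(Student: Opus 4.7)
The plan is to deduce the corollary directly from Corollary \ref{coro:Bernstein for CMC hypersurfaces of Lie groups} applied with $G=\mathbb R^{n+1}$, viewed as an abelian Lie group with its canonical (flat) biinvariant metric. Since $\mathbb R^{n+1}$ is flat, it is trivially Einstein, so we are in the ambient setting of that corollary.

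The preliminary observation is that a unit constant vector field $Y\in\mathbb R^{n+1}$ is both parallel and Killing; moreover, since the Lie algebra of $(\mathbb R^{n+1},+)$ is abelian, every element lies in the center of $\mathfrak{g}$. Thus $Y$ satisfies the hypothesis of being in the center of $\mathfrak g$, and we need not even appeal to the canonical direction alternative.

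The key point to verify is that the second fundamental form $A$ is bounded, since Corollary \ref{coro:Bernstein for CMC hypersurfaces of Lie groups} requires this. This follows immediately from the Gauss equation in $\mathbb R^{n+1}$: for any hypersurface,
\begin{equation*}
\mathrm{scal}=n^2H^2-|A|^2.
\end{equation*}
Since $H$ is constant by the cmc assumption and $\mathrm{scal}$ is bounded by hypothesis, $|A|^2=n^2H^2-\mathrm{scal}$ is bounded on $M$, and hence so is $A$. The remaining hypotheses of Corollary \ref{coro:Bernstein for CMC hypersurfaces of Lie groups}, namely polynomial volume growth and the lower bound \eqref{eq:growth of the support function} on $\langle N,Y\rangle$, are assumed outright.

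Applying that corollary, $M$ is a lateral class of a codimension one Lie subgroup of $(\mathbb R^{n+1},+)$. Such subgroups are exactly the linear hyperplanes, and their lateral classes are the affine hyperplanes of $\mathbb R^{n+1}$. Finally, Theorem \ref{thm:Bernstein for CMC hypersurfaces} (invoked inside that corollary) yields that $M$ is a leaf of $\langle Y\rangle^{\perp}$, so the affine hyperplane is orthogonal to $Y$, completing the proof. I do not anticipate any real obstacle: the argument is a straightforward specialization, with the only substantive ingredient being the standard Gauss-equation identity used to convert the scalar curvature bound into a bound on $|A|$.
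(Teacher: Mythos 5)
Your proposal is correct and follows essentially the same route as the paper: convert the scalar curvature bound into a bound on $|A|$ via the Gauss equation (using that $H$ is constant), then apply Corollary \ref{coro:Bernstein for CMC hypersurfaces of Lie groups} with $G=\mathbb R^{n+1}$ viewed as an abelian Riemannian Lie group, where the constant unit vector $Y$ lies in the center. Your extra remarks (identifying codimension-one Lie subgroups of $\mathbb R^{n+1}$ with linear hyperplanes and invoking the leaf condition of $\langle Y\rangle^{\perp}$ to get orthogonality to $Y$) only make explicit what the paper leaves implicit.
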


\begin{proof}
If $A$ stands for the Weingarten operator relative to $N$, then Gauss' equation gives $|A|^2=n^2H^2-n(n-1)R$, where $R$ is the scalar curvature of $M$ and $H$ is the mean curvature of $\varphi$ with respect to $N$. Therefore, the boundedness of $R$ implies that of $A$, and it suffices to apply the previous corollary to $\varphi(M)$.
\end{proof}

We now extend Theorem \ref{thm:Bernstein for CMC hypersurfaces} to the case of higher order mean curvatures, and to this end we need to recall a few facts concerning these objects.

In the sequel, $\varphi:M^n\to M^{n+1}$ stands for an isometric immersion from a connected, orientable Riemannian manifold $M^n$ into an oriented Riemannian manifold $\overline M$. We orient $M$ by the choice of a globally defined unit normal vector field $N$, and let $A$ denote the corresponding second fundamental form. 

Following section 3 of \cite{Alias:06}, one defines the $r$-th Newton transformation $T_r:\mathfrak X(M)\to\mathfrak X(M)$ recursively by letting
$$T_0=I\ \ \text{and}\ \ T_r=S_rI-AT_{r-1},\ \ 1\leq r\leq n,$$
where $I$ denotes the identity in $\mathfrak X(M)$ and $S_r(p)$ the $r$-th elementary symmetric sum of the eigenvalues of $A_p$, for every $p\in M$. 

An easy induction shows that each 
$$T_r=S_rI-S_{r-1}A+\cdots+(-1)^{r-1}S_1A^{r-1}+(-1)^rA^r.$$
In particular, $T_n=p_A(A)$, where $p_A$ is the characteristic polynomial of $A$; hence, $T_n=0$ by Cayley-Hamilton theorem. 

Since $A$ is self adjoint and $T_r$ is a polynomial in $A$, every base which diagonalizes $A$ also diagonalizes $T_r$, and using this fact one can establish, for $1\leq r\leq n$, the standard formulas
\begin{equation}\label{eq:tracos dos Tr}
\begin{split}
&\text{tr}(T_r)=(n-r)S_r,\\
&\text{tr}(AT_r)=(r+1)S_{r+1},\\
&\text{tr}(A^2T_{r-1})=S_1S_r-(r+1)S_{r+1},
\end{split}
\end{equation}
where $\text{tr}(\cdot)$ stands for the trace of the linear operator within parentheses. In particular, $r=1$ in the first and third formulas above yields
$$\text{tr}(T_1)=(n-1)S_1=n(n-1)H\ \ \text{and}\ \ |A|^2=\text{tr}(A^2)=S_1^2-2S_2.$$

Given a Killing vector field $Y$ on $\overline M$, and letting (as before) $Y^{\top}$ denote the orthogonal projection of $Y_{|M}$ onto $TM$, one can compute for $0\leq r\leq n$ (cf. formula (8.4) of \cite{Alias:06})
\begin{equation}\label{eq:divergence of TrYtangent for Y Killing}
\text{div}_M(T_rY^{\top})=\langle\text{div}_MT_r,Y\rangle+\text{tr}(AT_r)\langle N,Y\rangle.
\end{equation}
Here, $\text{div}_MT_r$, the divergence of $T_r$, is the vector field on $M$ defined by
$$\text{div}_MT_r=\text{tr}(\nabla T_r).$$

One can show (cf. Lemma $3.1$ of \cite{Alias:06}, for instance) that, if $\{e_1,\ldots,e_n\}$ is a local orthonormal frame on $M$ and $V\in\mathfrak X(M)$, then
$$\langle\text{div}_MT_r,V\rangle=\sum_{j=1}^r\sum_{i=1}^n\langle\overline R(N,T_{r-j}e_i)e_i,A^{j-1}V\rangle,$$
where $\overline R$ is the curvature operator of $\overline M$. In particular, if $\overline M$ has constant sectional curvature, then this formula readily shows that $\text{div}_MT_r=0$ on $M$.

We now need the following

\begin{lemma}
In the notations above, if $\overline M$ has constant sectional curvature and $Y$ is a Killing vector field on $\overline M$, then, for $1\leq r\leq n$, we have
$$\text{\rm div}_M(AT_{r-1}Y^{\top})=\langle\nabla S_r,Y^{\top}\rangle+\text{\rm tr}(A^2T_{r-1})\langle N,Y\rangle.$$
\end{lemma}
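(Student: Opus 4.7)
The plan is to use the recursion $T_r=S_rI-AT_{r-1}$ to rewrite $AT_{r-1}Y^{\top}=S_rY^{\top}-T_rY^{\top}$, which reduces the lemma to applying the already-available formula \eqref{eq:divergence of TrYtangent for Y Killing} to $T_r$ and a direct computation of $\mathrm{div}_M(Y^{\top})$. Splitting the divergence gives
\begin{equation*}
\mathrm{div}_M(AT_{r-1}Y^{\top})=\mathrm{div}_M(S_rY^{\top})-\mathrm{div}_M(T_rY^{\top}).
\end{equation*}

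For the second term, the constant sectional curvature hypothesis makes $\overline R(N,T_{r-j}e_i)e_i$ tangent to $M$ (in fact, the bracket-free expression for $\langle\mathrm{div}_MT_r,V\rangle$ recorded just before the lemma vanishes identically), so $\mathrm{div}_MT_r=0$. Then \eqref{eq:divergence of TrYtangent for Y Killing} and the second formula in \eqref{eq:tracos dos Tr} give
\begin{equation*}
\mathrm{div}_M(T_rY^{\top})=\mathrm{tr}(AT_r)\,\eta=(r+1)S_{r+1}\,\eta,
\end{equation*}
where $\eta=\langle N,Y\rangle$. For the first term I would use the Leibniz rule to write $\mathrm{div}_M(S_rY^{\top})=\langle\nabla S_r,Y^{\top}\rangle+S_r\,\mathrm{div}_M(Y^{\top})$, so what remains is to evaluate $\mathrm{div}_M(Y^{\top})$.

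The key calculation, and the only subtle point, is showing $\mathrm{div}_M(Y^{\top})=nH\eta=S_1\eta$. I would decompose $Y=Y^{\top}+\eta N$ along $M$, so that for a local orthonormal frame $\{e_i\}$ and tangential connection $\nabla$,
\begin{equation*}
\nabla_{e_i}Y^{\top}=(\overline\nabla_{e_i}Y)^{\top}-(\overline\nabla_{e_i}(\eta N))^{\top}=(\overline\nabla_{e_i}Y)^{\top}+\eta\,Ae_i,
\end{equation*}
using $(\overline\nabla_{e_i}N)^{\top}=-Ae_i$ together with the Weingarten formula. Tracing and invoking the Killing equation $\langle\overline\nabla_{e_i}Y,e_i\rangle=0$ kills the first sum, leaving $\mathrm{div}_M(Y^{\top})=\eta\,\mathrm{tr}(A)=nH\eta=S_1\eta$.

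Assembling the three pieces,
\begin{equation*}
\mathrm{div}_M(AT_{r-1}Y^{\top})=\langle\nabla S_r,Y^{\top}\rangle+\bigl(S_1S_r-(r+1)S_{r+1}\bigr)\eta,
\end{equation*}
and the third identity in \eqref{eq:tracos dos Tr} recognizes the coefficient of $\eta$ as $\mathrm{tr}(A^2T_{r-1})$, completing the proof. The only real obstacle is the Killing-based computation of $\mathrm{div}_M(Y^{\top})$; everything else is bookkeeping with the recursion and the already established trace identities.
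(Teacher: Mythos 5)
Your argument is correct and takes essentially the same route as the paper's proof: decompose $AT_{r-1}Y^{\top}=S_rY^{\top}-T_rY^{\top}$, use $\mathrm{div}_MT_r=0$ together with \eqref{eq:divergence of TrYtangent for Y Killing} and the trace identities \eqref{eq:tracos dos Tr}, your direct Killing-equation computation of $\mathrm{div}_M(Y^{\top})=S_1\langle N,Y\rangle$ being exactly the $r=0$ instance of \eqref{eq:divergence of TrYtangent for Y Killing}. Only a cosmetic slip in a side remark: in constant curvature, $\overline R(N,T_{r-j}e_i)e_i$ is proportional to $N$, hence normal (not tangent) to $M$, which is precisely why its inner product with the tangent vector $A^{j-1}V$ vanishes and $\mathrm{div}_MT_r=0$.
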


\begin{proof}
Since $AT_{r-1}=S_rI-T_r$, we can compute
\begin{equation}\nonumber
\begin{split}
\text{div}_M(AT_{r-1}Y^{\top})&\,=\text{div}_M(S_rY^{\top})-\text{div}_M(T_rY^{\top})\\
&\,=\langle\nabla S_r,Y^{\top}\rangle+S_r\text{div}_M(Y^{\top})-\text{div}_M(T_rY^{\top})\\
\end{split}
\end{equation}
Now, taking into account that $\text{div}_MT_r=0$ and substituting \eqref{eq:divergence of TrYtangent for Y Killing} and \eqref{eq:tracos dos Tr}, we obtain  
\begin{equation}\nonumber
\begin{split}
\text{div}_M(AT_{r-1}Y^{\top})&\,=\langle\nabla S_r,Y^{\top}\rangle+S_r\text{tr}(A)\langle N,Y\rangle-\text{\rm tr}(AT_r)\langle N,Y\rangle\\
&\,=\langle\nabla S_r,Y^{\top}\rangle+(S_1S_r-(r+1)S_{r+1})\langle N,Y\rangle\\
&\,=\langle\nabla S_r,Y^{\top}\rangle+\text{tr}(A^2T_{r-1})\langle N,Y\rangle.
\end{split}
\end{equation}
\end{proof}

We are now ready to generalize Theorem \ref{thm:Bernstein for CMC hypersurfaces}.

\begin{theorem}\label{thm:Bernstein for Sr constant hypersurfaces}
Let $\overline M^{n+1}$ be an oriented Riemannian manifold with constant sectional curvature, endowed with a Killing vector field $Y$ of unit norm. Let $\varphi:M^n\to M^{n+1}$ be an isometric immersion of a connected, orientable, complete noncompact Riemannian manifold $M^n$ into $\overline M$. Orient $M$ by the choice of a globally defined unit normal vector field $N$, and assume that the corresponding second fundamental form $A$ is bounded. Assume further that $T_{r-1}$ is nonnegative and $\text{\rm tr}(T_r)$ is constant on $M$, for some $1\leq r<n$. If $M$ has polynomial volume growth, $Y$ is either parallel or a canonical direction for $\varphi$ and the support function $\eta=\langle N,Y\rangle$ satisfies
\begin{equation}\label{eq:growth of the support function in the r case}
\eta\geq\frac{1}{\text{\rm tr}(A^2T_{r-1})+1}
\end{equation}
{\color{black} on $M$}, then $M$ is a leaf of the distribution $\langle Y\rangle^{\bot}$. In particular, $M$ is totally geodesic in $\overline M$.
\end{theorem}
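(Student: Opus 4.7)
The plan is to mimic the structure of the proof of Theorem \ref{thm:Bernstein for CMC hypersurfaces}, replacing the vector field $X=AY^{\top}$ by $X=AT_{r-1}Y^{\top}$ and keeping $f=1-\eta$. The target is to verify the hypotheses of Theorem \ref{prop:maximum principle II}(a) with $K=\emptyset$, which then yields $f\le 0$ on $M$; combined with $\eta\le 1$ (a consequence of $|Y|=1$ and Cauchy--Schwarz), this forces $\eta\equiv 1$ on $M$, hence $N\equiv Y$, so $M$ is a leaf of $\langle Y\rangle^{\bot}$.

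First I compute $\operatorname{div}_M X$. Since $\overline M$ has constant sectional curvature, the Lemma above applies; and since $\operatorname{tr}(T_r)=(n-r)S_r$ is constant with $r<n$, $S_r$ itself is constant, so $\nabla S_r=0$. The Lemma then reduces to
\begin{equation}\nonumber
\operatorname{div}_M X=\operatorname{tr}(A^2T_{r-1})\,\eta.
\end{equation}
On the other hand, hypothesis \eqref{eq:growth of the support function in the r case} is equivalent to $\eta\bigl(\operatorname{tr}(A^2T_{r-1})+1\bigr)\ge 1$, i.e.\ to $\operatorname{tr}(A^2T_{r-1})\,\eta\ge 1-\eta=f$. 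Therefore
\begin{equation}\nonumber
\operatorname{div}_M X\ge f\quad\text{on }M.
\end{equation}

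Next I check $\langle\nabla f,X\rangle\ge 0$, i.e.\ $\langle\nabla\eta,X\rangle\le 0$, using \eqref{eq:gradient of the support function of a Killing vector field}. Recall that $T_{r-1}$, being a polynomial in $A$, commutes with $A$ and, in each case, preserves the line spanned by $Y^{\top}$. If $Y$ is parallel, then $\overline\nabla_NY=0$, so $\nabla\eta=-AY^{\top}$ and
\begin{equation}\nonumber
\langle\nabla\eta,X\rangle=-\langle AY^{\top},T_{r-1}(AY^{\top})\rangle\le 0,
\end{equation}
since $T_{r-1}\ge 0$. If $Y$ is a canonical direction with $AY^{\top}=\lambda Y^{\top}$, then $T_{r-1}Y^{\top}=\mu Y^{\top}$ for some $\mu\ge 0$ (again because $T_{r-1}\ge 0$ commutes with $A$), so $X=\lambda\mu Y^{\top}$ is proportional to $Y^{\top}$. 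Using $\langle\overline\nabla_NY,Y\rangle=0$ (from $|Y|=1$), I get
\begin{equation}\nonumber
\langle\nabla\eta,X\rangle=-\lambda\mu\langle AY^{\top},Y^{\top}\rangle-\lambda\mu\langle\overline\nabla_NY,Y\rangle=-\lambda^{2}\mu|Y^{\top}|^{2}\le 0.
\end{equation}

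Finally, $X=AT_{r-1}Y^{\top}$ is bounded: $|Y|=1$ bounds $|Y^{\top}|$, $A$ is bounded by hypothesis, and $T_{r-1}$ is a polynomial in $A$, hence bounded. Since $M$ has polynomial volume growth, Theorem \ref{prop:maximum principle II}(a) (with $K=\emptyset$) gives $f\le 0$ on $M$. But \eqref{eq:growth of the support function in the r case} forces $\eta>0$ and Cauchy--Schwarz gives $\eta\le 1$, so $f\ge 0$; hence $f\equiv 0$, i.e.\ $\eta\equiv 1$, so $N=Y$ along $M$ and $M$ is a leaf of $\langle Y\rangle^{\bot}$, and totally geodesic in $\overline M$ as remarked in Section~\ref{section3}.

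The main technical point I expect to need care with is the sign verification of $\langle\nabla\eta,X\rangle\le 0$ in the canonical direction case: it is crucial that $T_{r-1}\ge 0$ (so that $\mu\ge 0$) and that $T_{r-1}$ commutes with $A$, in order to rewrite $\langle AY^{\top},T_{r-1}(AY^{\top})\rangle$ as a nonnegative quadratic form value. Everything else is a direct specialization of the computation in Theorem \ref{thm:Bernstein for CMC hypersurfaces}, now using the Lemma in place of the divergence identity employed there.
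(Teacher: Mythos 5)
Your proposal is correct and follows essentially the same route as the paper: the same choices $f=1-\eta$, $X=AT_{r-1}Y^{\top}$, the same use of the Lemma (with $S_r$ constant since $\operatorname{tr}(T_r)=(n-r)S_r$ and $r<n$) to get $\operatorname{div}_MX=\operatorname{tr}(A^2T_{r-1})\,\eta\geq f$, and the same application of Theorem \ref{prop:maximum principle II}(a). The only cosmetic difference is in verifying $\langle\nabla f,X\rangle\geq 0$: the paper writes it as $|AQ_{r-1}Y^{\top}|^2$ using a square root $Q_{r-1}$ of $T_{r-1}$, while you use the nonnegativity of $T_{r-1}$ directly (via $\langle v,T_{r-1}v\rangle\geq 0$ and the eigenvalue $\mu\geq 0$), which is equally valid.
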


\begin{proof}
Once again we let $\eta=\langle N,Y\rangle$ and $f=1-\eta$, so that $f\geq 0$, with equality if and only if $N=Y$ along $M$. We also set $X=AT_{r-1}Y^{\top}$. 

As in the proof of Theorem \ref{thm:Bernstein for CMC hypersurfaces}, if $Y$ is parallel, then $\nabla\eta=-AY^{\top}$. If $Y$ is a canonical direction for $\varphi$, then the fact that $T_{r-1}$ is a polynomial in $A$ assures that $AT_{r-1}Y^{\top}=\mu Y^{\top}$ for some function $\mu$ on $M$; then \eqref{eq:gradient of the support function of a Killing vector field} yields
\begin{equation}\nonumber
\begin{split}
\langle\nabla\eta,X\rangle&\,=\langle-AY^{\top}-\overline\nabla_NY,X\rangle\\
&\,=-\langle AY^{\top},AT_{r-1}Y^{\top}\rangle-\langle\overline\nabla_NY,\mu Y^{\top}\rangle\\
&\,=-\langle A^2T_{r-1}Y^{\top},Y^{\top}\rangle-\mu\langle\overline\nabla_NY,Y^{\top}\rangle\\
&\,=-\langle A^2T_{r-1}Y^{\top},Y^{\top}\rangle,
\end{split}
\end{equation}
where, in the last equality above, we used the fact that $\overline\nabla_NY$ has no orthogonal component and $|Y|=1$. Thus, in each of the cases above, we have
$$\langle\nabla f,X\rangle=-\langle\nabla\eta,X\rangle=\langle A^2T_{r-1}Y^{\top},Y^{\top}\rangle.$$
Since $T_{r-1}$ is nonnegative and self adjoint, it has a square root $Q_{r-1}$ which also commutes with $A$. Hence, $Q_{r-1}$ is self adjoint and the last computation above provides
$$\langle\nabla f,X\rangle=\langle(AQ_{r-1})^2Y^{\top},Y^{\top}\rangle=|AQ_{r-1}Y^{\top}|^2\geq 0.$$

On the other hand, since $S_r$ is constant, the previous lemma gives
$$\text{\rm div}_M(X)=\text{tr}(A^2T_{r-1})\langle N,Y\rangle.$$
Hence, \eqref{eq:growth of the support function in the r case} is equivalent to {\color{black} $\mathrm{div}_M(X)\geq f$ on $M$}.

The boundedness of $A$ on $M$ {\color{black} implies} that of $AT_{r-1}$; this, together with the fact that $|Y|=1$, give the boundedness of $X$ on $M$. Since $M$ has polynomial volume growth, Theorem \ref{prop:maximum principle II} implies {\color{black} $f\leq 0$ on $M$}. The rest of the proof thus goes as in {\color{black} the proof of Theorem \ref{thm:Bernstein for CMC hypersurfaces}}.
\end{proof}

\section{On the existence and size of minimal submanifolds}
\label{section4}

Along all of this section, unless stated otherwise, $\overline M^m$ stands for a Riemannian manifold with metric tensor $\overline g=\langle\cdot,\cdot\rangle$ and Levi-Civita connection $\overline\nabla$. 

We recall that a vector field $Y\in\mathfrak X(\overline M)$ is conformal with conformal factor $\phi\in\mathcal C^{\infty}(\overline M)$ provided $\mathcal L_Y\overline g=2\phi\overline g$, where $\mathcal L_Y$ stands for the Lie derivative in the direction of $Y$. If this is so, it is straightforward to verify that $\langle\overline\nabla_XY,Z\rangle=\phi\langle Z,Z\rangle$, {\color{black} for all} $Z\in\mathfrak X(\overline M)$. Then, the divergence of $Y$ on $\overline M$ is given by $\text{div}_{\overline M}(Y)=m\phi$. 

We shall need the following

\begin{lemma}\label{lemma:relation between divergences}
Let $\overline M^m$ be a Riemannian manifold with metric tensor $\overline g=\langle\cdot,\cdot\rangle$ and $Y\in\mathfrak X(\overline M)$ be a conformal vector field with conformal factor $\phi$. If $\varphi:M^n\to\overline M^m$ is an isometric immersion and $X=(Y_{|M})^{\top}$ is the orthogonal projection of $Y_{|M}$ into $TM$, then
\begin{equation}\label{eq:relation between divergences}
\text{\rm div}_M(X)=n\big(\phi_{|M}+\langle Y_{|M},\overrightarrow{H}\rangle\big),
\end{equation}
where $\overrightarrow{H}$ stands for the mean curvature vector of $\varphi$.
\end{lemma}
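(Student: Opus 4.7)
The plan is to compute $\operatorname{div}_M(X)$ at an arbitrary point $p\in M$ using a local orthonormal frame $\{e_1,\ldots,e_n\}$ of $TM$ around $p$. The key algebraic tool is the pointwise orthogonal decomposition $Y|_M = X + Y^{\perp}$, where $Y^{\perp}$ denotes the component of $Y|_M$ normal to $M$.

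First, I would differentiate this decomposition along each $e_i$ with $\overline\nabla$ and take tangential parts. By the Gauss formula, the tangential component of $\overline\nabla_{e_i} X$ is precisely the intrinsic covariant derivative $\nabla_{e_i} X$; by the Weingarten formula, the tangential component of $\overline\nabla_{e_i} Y^{\perp}$ is $-A_{Y^{\perp}}(e_i)$, where $A_{\xi}$ denotes the shape operator of $\varphi$ in the direction of the normal vector $\xi$. Taking inner products with $e_i$ and summing over $i$, I obtain
\[
\operatorname{div}_M(X)=\sum_{i=1}^n\langle\overline\nabla_{e_i}Y,e_i\rangle+\sum_{i=1}^n\langle A_{Y^{\perp}}(e_i),e_i\rangle.
\]

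For the first sum, the conformal identity recalled just before the statement of the lemma yields $\langle\overline\nabla_{e_i}Y,e_i\rangle=\phi\,\langle e_i,e_i\rangle=\phi|_M$ at $p$, so this sum contributes $n\,\phi|_M$. For the second sum, I recognize $\sum_i\langle A_{Y^{\perp}}(e_i),e_i\rangle=\operatorname{tr}(A_{Y^{\perp}})$, and by the very definition of the mean curvature vector this trace equals $n\langle \overrightarrow{H},Y^{\perp}\rangle$. Since $\overrightarrow{H}$ is normal to $M$, I may replace $Y^{\perp}$ by $Y|_M$ in this last inner product, and adding the two pieces gives exactly \eqref{eq:relation between divergences}.

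The whole argument is a direct local computation, so I do not anticipate any genuine obstacle; the only subtle point is keeping straight the sign convention relating the shape operator to the second fundamental form via $\langle A_{\xi}u,v\rangle=\langle\alpha(u,v),\xi\rangle$, which is what ensures that $\overrightarrow{H}$ appears on the right-hand side with the plus sign required by the statement.
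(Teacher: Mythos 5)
Your proof is correct and follows essentially the same route as the paper: a direct computation of $\operatorname{div}_M(X)$ in a local orthonormal tangent frame, combining the conformal identity $\langle\overline\nabla_{e_i}Y,e_i\rangle=\phi$ with the trace of the shape operator to produce $n\langle Y,\overrightarrow{H}\rangle$. The only (cosmetic) difference is that you handle the normal part $Y^{\perp}$ at once via the Weingarten formula, whereas the paper expands it in an orthonormal normal frame $(N_1,\ldots,N_l)$ and sums the traces $\operatorname{tr}(A_j)$.
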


\begin{proof}
We fix a local orthonormal frame field $(e_1,\ldots,e_n)$ on an open set $U\subset M$. Setting $l=m-n$ and shrinking $U$, if necessary, we can also consider an orthonormal frame field $(N_1,\ldots,N_l)$ for $TU^{\bot}$. 

Writing $Y$ instead of $Y_{|M}$ for the sake of simplicity, we have $X=Y-\sum_{j=1}^l\langle Y,N_j\rangle N_j$ on $U$. Letting $\nabla$ stand for the Levi-Civita connection of $M$, we get on $U$ that
\begin{equation}\nonumber
\begin{split}
\text{div}_M(X)&\,=\sum_{i=1}^n\langle\nabla_{e_i}X,e_i\rangle=\sum_{i=1}^n\langle\overline\nabla_{e_i}X,e_i\rangle\\
&\,=\sum_{i=1}^n\langle\overline\nabla_{e_i}Y,e_i\rangle-\sum_{i=1}^n\sum_{j=1}^l\langle\overline\nabla_{e_i}\langle Y,N_j\rangle N_j,e_i\rangle\\
&\,=n\phi-\sum_{j=1}^l\langle Y,N_j\rangle\sum_{i=1}^n\langle\overline\nabla_{e_i}N_j,e_i\rangle.
\end{split}
\end{equation}

Denoting by $A_j$ the Weingarten operator of $\varphi$ in the direction of $N_j$, we can continue the computation above by writting
\begin{equation}\nonumber
\begin{split}
\text{div}_M(X)&\,=n\phi+\sum_{j=1}^l\langle Y,N_j\rangle\sum_{i=1}^n\langle A_je_i,e_i\rangle\\
&\,=n\phi+\sum_{j=1}^l\langle Y,N_j\rangle\text{tr}(A_j),
\end{split}
\end{equation}
where $\text{tr}(\cdot)$ stands for the trace of the operator within parentheses. Therefore,
$$\text{div}_M(X)=n\phi+\langle Y,\sum_{j=1}^l\text{tr}(A_j)N_j\rangle=n\phi+\langle Y,n\overrightarrow{H}\rangle,$$
as wished.
\end{proof}

In the sequel, if $B^k$ is another Riemannian manifold, $\pi:\overline M^m\to B^k$ is a Riemannian submersion and $Z\in\mathfrak X(B)$, we let $\tilde Z$ denote the horizontal lift of $Z$ to $\overline M$. Also, given a smooth function $h:B\to\mathbb R$, we shall write $\tilde h$ to denote the composition $\tilde h=h\circ\pi:\overline M\to\mathbb R$. Letting $Dh$ and $\overline\nabla\tilde h$ denote the gradients of $h$ (on $B$) and $\tilde h$ (on $\overline M$), respectively, we have $\overline\nabla\tilde h=\widetilde{Dh}$.

If, in addition, $\varphi:M^n\to\overline M^m$ is an isometric immersion, then $\varphi$ can be locally seen as the inclusion. Therefore, if $h$ is as above and there is no danger of confusion, we set $f=\tilde h\circ\varphi:=\tilde h_{|M}$ (look at the diagram below).
$$\xymatrix{
M^n\ar[r]^{\varphi}&\overline M^m\ar[d]^{\pi}\ar[rd]^{\tilde h}&\\
&B^k\ar[r]_h&\mathbb R
}$$
If $\nabla f$ denotes the gradient of $f$ on $M$, then 
\begin{equation}\label{eq:the gradient of f}
\nabla f=\big(\overline\nabla\tilde h\big)^{\top}=\big(\widetilde{Dh}\big)^{\top},
\end{equation}
the orthogonal projection, onto $TM$, of the restriction of $\widetilde{Dh}$ to $T\overline M_{|M}$.

We now assume that there exist a smooth function $g:B\to[0,+\infty)$ and a vector field $Y\in\mathfrak X(\overline M)$ such that $\overline\nabla\tilde h=\tilde gY$. Then, with $f$ as above and $X=(Y_{|M})^{\top}$, \eqref{eq:the gradient of f} gives $\nabla f=\tilde g_{|M}X$, whence
\begin{equation}\label{eq:nonnegativity of the inner product}
\langle\nabla f,X\rangle=\tilde g_{|M}|X|^2\geq 0.
\end{equation}
Moreover, if $Y$ is conformal with conformal factor $\phi$ and $a$ is a positive constant, then it follows from \eqref{eq:relation between divergences} that 
\begin{equation}\label{eq:comparing divergence of X with af}
\text{div}_M(X)\geq af\Leftrightarrow n\big(\phi_{|M}+\langle Y_{|M},\overrightarrow{H}\rangle\big)\geq a\tilde h_{|M}.
\end{equation}
In particular, this is automatically true if $\varphi$ is minimal and $n\phi\geq a\tilde h$ along $\varphi(M)$.

We can now state and prove our main results.

\begin{theorem}\label{thm:general on submersions}
Let $\pi:\overline M^m\to B^k$ be a Riemannian submersion and $Y\in\mathfrak X(\overline M)$ be conformal with conformal factor $\phi$. Assume that there exist smooth functions $g:B\to\mathbb R$ and $h:B\to(0,+\infty)$ such that $\overline\nabla\tilde h=\tilde gY$. Let $\varphi:M^n\to\overline M^m$ be an isometric immersion from an oriented, complete noncompact Riemannian manifold $M$ into $\overline M$, such that $\tilde g\geq 0$, $n\phi\geq a\tilde h$ and $|Y|\leq c$ on $\varphi(M)$, for some positive constants $a$ and $c$. 
\begin{enumerate}[$(a)$]
\item If $M$ has polynomial volume growth, then $\varphi$ cannot be minimal.
\item If $M$ has exponential volume growth, say like $e^{\beta t}$, and $\varphi$ is minimal, then $\tilde h\leq\frac{c\beta}{a}$ on $\varphi(M)$.
\end{enumerate}
\end{theorem}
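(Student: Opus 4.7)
The plan is to reduce the theorem directly to Theorem \ref{prop:maximum principle II} with $K=\emptyset$, using the ingredients already laid out in the paragraph preceding the statement. The whole proof should be short once the right $f$ and $X$ are identified.

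First I would set $f=\tilde h_{|M}$ and $X=(Y_{|M})^{\top}$, and collect the hypotheses of Theorem \ref{prop:maximum principle II} one by one. Since $h>0$, we have $f>0$ on $M$; and since $|X|\leq|Y_{|M}|\leq c$ on $\varphi(M)$, the vector field $X$ is bounded on $M$ by $c$. The inequality $\langle\nabla f,X\rangle\geq 0$ on $M$ is immediate from \eqref{eq:nonnegativity of the inner product} together with $\tilde g\geq 0$ on $\varphi(M)$. This verifies the sign hypothesis on $\langle\nabla f,X\rangle$.

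Next I would handle the divergence condition. Assuming $\varphi$ is minimal, so $\overrightarrow{H}\equiv 0$, the equivalence \eqref{eq:comparing divergence of X with af} reduces to
\[
\text{div}_M(X)\geq af\iff n\phi_{|M}\geq a\tilde h_{|M},
\]
and the right-hand side is exactly the standing assumption $n\phi\geq a\tilde h$ on $\varphi(M)$. Hence $\text{div}_M(X)\geq af$ on all of $M$, so we may apply Theorem \ref{prop:maximum principle II} with $K=\emptyset$ (which is trivially stable under the flow of $X$).

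For part (a), item (a) of Theorem \ref{prop:maximum principle II} would then force $f\leq 0$ on $M$, contradicting $f=\tilde h_{|M}>0$; so $\varphi$ cannot have been minimal. For part (b), item (b) of Theorem \ref{prop:maximum principle II} would give $f\leq c\beta/a$ on $M$, i.e.\ $\tilde h\leq c\beta/a$ on $\varphi(M)$. There is no real obstacle here: the whole argument is a direct translation through the identities \eqref{eq:nonnegativity of the inner product} and \eqref{eq:comparing divergence of X with af}, so the only subtlety is to state explicitly that in case (a) one argues by contradiction (assuming minimality in order to activate the divergence inequality), while in case (b) minimality is in the hypothesis from the start.
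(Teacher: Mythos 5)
Your proposal is correct and follows essentially the same route as the paper: take $f=\tilde h_{|M}$ and $X=(Y_{|M})^{\top}$, verify boundedness, nonnegativity of $\langle\nabla f,X\rangle$ via \eqref{eq:nonnegativity of the inner product}, and the divergence inequality via \eqref{eq:comparing divergence of X with af} under minimality, then apply Theorem \ref{prop:maximum principle II} with $K=\emptyset$, arguing by contradiction in case (a). No gaps.
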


\begin{proof}
We have done almost all of the work along the previous discussion: with $f=\tilde h_{|M}$ and $X=(Y_{|M})^{\top}$, we have $|X|\leq|Y_{|M}|\leq c$ and, from \eqref{eq:nonnegativity of the inner product}, $\langle\nabla f,X\rangle\geq 0$ on $M$. Moreover, if $\varphi$ is minimal, then \eqref{eq:comparing divergence of X with af} and our hypotheses give $\text{div}_M(X)\geq af$ on all of $M$. We now consider cases (a) and (b) separately:\\

\noindent (a) Theorem \ref{prop:maximum principle II} ascertains that $f\leq 0$ along $M$, which is a contradiction, for $h$ is positive on $B$.\\

\noindent (b) This follows immediately from Theorem \ref{prop:maximum principle II}.
\end{proof}

We now specialize the previous result in the following way: we let $\Sigma^{m-1}$ be a Riemannian manifold with metric $\sigma$ and $I\subset\mathbb R$ be an open interval with its standard metric $dt^2$. We set $\overline M^m=\Sigma^{m-1}\times I$ and let $\pi_{\sigma}:\overline M\to\Sigma$ and $\pi_I:\overline M\to I$ denote the standard projections. If $h:I\to(0,+\infty)$ is a smooth function and $\overline g=\tilde h^2\pi_{\Sigma}^*\sigma+\pi_I^*dt^2$, then $\overline g$ is a metric tensor on $\overline M$, with respect to which $\overline M$ is said to be the warped product of $\Sigma$ and $I$, with warping function $h$. We summarize the above by writing $\overline M=\Sigma\times_hI$, and note that $\pi_I:\overline M\to I$ is a Riemannian submersion.

It is a standard fact that $Y=\tilde h\tilde{\partial_t}$ is a conformal vector field with conformal factor $\phi=\tilde{h'}$. Moreover, if $g=\frac{h'}{h}$, then
\begin{equation}\label{eq:relation between gradient of h tilde and Y in warpeds}
\overline\nabla\tilde h=\tilde{h'}\tilde\partial_t=\tilde g\tilde h\tilde\partial_t=\tilde gY.
\end{equation}

We are thus left with the following

\begin{corollary}\label{cor:specializing to warped products}
Let $\overline M=\Sigma\times_hI$ be a warped product as above. Let $\varphi:M^n\to\overline M^m$ be an isometric immersion from an oriented, complete noncompact Riemannian manifold $M$ into $\overline M$, such that $\tilde h\leq\frac{n}{a}\tilde h'$ and $\tilde h\leq c$ on $\varphi(M)$, for some positive constants $a$ and $c$. 
\begin{enumerate}[$(a)$]
\item If $M$ has polynomial volume growth, then $\varphi$ cannot be minimal.
\item If $M$ has exponential volume growth, say like $e^{\beta t}$, and $\varphi$ is minimal, then $\beta\geq a$.
\end{enumerate}
\end{corollary}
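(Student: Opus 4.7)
The plan is to reduce the corollary to a direct application of Theorem \ref{thm:general on submersions}, exploiting the warped product structure worked out in the paragraph immediately preceding the statement. There, $Y=\tilde h\tilde\partial_t$ was shown to be a conformal vector field with conformal factor $\phi=\tilde{h'}$, and with $g=h'/h$ the identity $\overline\nabla\tilde h=\tilde gY$ recorded in \eqref{eq:relation between gradient of h tilde and Y in warpeds} places $\overline M=\Sigma\times_hI$ squarely inside the framework of the theorem. Since $\pi_I:\overline M\to I$ is already known to be a Riemannian submersion, what remains is simply to translate the corollary's hypotheses into those of the theorem.

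First I would check the three conditions on $\varphi(M)$. Because $\tilde h>0$ and $a>0$, the hypothesis $\tilde h\leq\frac{n}{a}\tilde h'$ forces $\tilde{h'}>0$, so $\tilde g=\tilde{h'}/\tilde h>0$, which certainly implies $\tilde g\geq 0$. The same inequality rewrites as $n\tilde{h'}\geq a\tilde h$, i.e.\ $n\phi\geq a\tilde h$ on $\varphi(M)$. Finally, $|Y|^2=\tilde h^2|\tilde\partial_t|^2=\tilde h^2\leq c^2$, so $|Y|\leq c$. With all three conditions satisfied, Theorem \ref{thm:general on submersions}(a) applies directly and yields part (a) of the corollary without any further argument.

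Part (b) requires one small additional observation. Theorem \ref{thm:general on submersions}(b) delivers $\tilde h\leq\frac{c\beta}{a}$ on $\varphi(M)$, but the desired conclusion $\beta\geq a$ cannot mention $c$. The key point is that $c$ in the theorem may be replaced by any upper bound for $|Y|=\tilde h$, so I would set $c_0=\sup_{\varphi(M)}\tilde h$, which is positive because $h>0$ and finite because $c_0\leq c$, and reapply Theorem \ref{thm:general on submersions}(b) with $c_0$ in place of $c$. This yields $\tilde h\leq\frac{c_0\beta}{a}$ throughout $\varphi(M)$; taking the supremum of the left-hand side gives $c_0\leq\frac{c_0\beta}{a}$, and dividing by $c_0>0$ produces $\beta\geq a$. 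This tightening of the constant is the only genuinely nonmechanical step; everything else is bookkeeping.
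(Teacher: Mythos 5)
Your proposal is correct. For part (a) it is exactly the paper's argument: verify that $Y=\tilde h\tilde\partial_t$, $\phi=\tilde h'$, $g=h'/h$ satisfy $\tilde g\geq 0$ (indeed $\tilde g\geq a/n>0$), $n\phi\geq a\tilde h$ and $|Y|=\tilde h\leq c$ on $\varphi(M)$, and quote Theorem \ref{thm:general on submersions}(a). Where you diverge is the finishing step of part (b). The paper bootstraps: from $\tilde h\leq\frac{c\beta}{a}$ it reapplies Theorem \ref{thm:general on submersions}(b) with $\frac{c\beta}{a}$ in place of $c$, obtaining $\tilde h\leq c\bigl(\frac{\beta}{a}\bigr)^l$ for every $l\geq 1$, and if $\beta<a$ lets $l\to+\infty$ to contradict $\tilde h>0$. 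You instead apply the theorem once with the sharp constant $c_0=\sup_{\varphi(M)}\tilde h$ (positive since $\tilde h>0$, finite since $c_0\leq c$), take the supremum in $\tilde h\leq\frac{c_0\beta}{a}$ to get $c_0\leq\frac{c_0\beta}{a}$, and divide by $c_0$. Both are legitimate ways of exploiting the fact that the constant $c$ in the theorem may be taken to be any bound for $|Y|$ on $\varphi(M)$; your fixed-point version is shorter and avoids the limiting argument, while the paper's iteration never needs to name the supremum, and it also records explicitly that $\beta>0$ (which in your version falls out automatically from $c_0\leq\frac{c_0\beta}{a}$). No gap in either step.
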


\begin{proof}
Setting $Y=\tilde h\tilde{\partial_t}$ and $g=\frac{h'}{h}$, we already know that $Y$ is conformal, with conformal factor $\phi=\tilde h'$, and \eqref{eq:relation between gradient of h tilde and Y in warpeds} gives $\overline\nabla\tilde h=\tilde gY$. Moreover, our hypotheses assure that $\tilde g\geq\frac{a}{n}>0$, $n\phi-a\tilde h\geq 0$ and $|Y|\leq c$ along $\varphi(M)$. Item (a) is now a particular case of the previous result. 

As for item (b), we conclude from the previous result that $\tilde h\leq\frac{c\beta}{a}$ on $\varphi(M)$. Since $\tilde h>0$, one has to have $\beta>0$. We can then apply item (b) again, this time with $\frac{\beta c}{a}$ in place of $c$, to conclude that $\tilde h\leq c\big(\frac{\beta}{a}\big)^2$ on $\varphi(M)$. By iterating this reasoning, we therefore conclude that $\tilde h\leq c\Big(\frac{\beta}{a}\Big)^l$ on $\varphi(M)$, for every $l\geq 1$. If $0<\beta<a$, then $0<\frac{\beta}{a}<1$ and, letting $l\to+\infty$, we conclude that $\tilde h\leq 0$ on $\varphi(M)$, which is impossible. Hence, $\beta\geq a$.
\end{proof}

We close this section with the following interesting particular cases of the previous result.

\begin{corollary}\label{cor:specializing to Rm}
Let $\varphi:M^n\to\mathbb R^m$ be an isometric immersion from an oriented, complete noncompact Riemannian manifold $M$ into the Euclidean $m$-space, such that $\varphi(M)\subset B_{\mathbb R^m}(0,R)$, for some $R>0$.
\begin{enumerate}[$(a)$]
\item If $M$ has polynomial volume growth, then $\varphi$ cannot be minimal.
\item If $M$ has exponential volume growth, say like $e^{\beta t}$, and $\varphi$ is minimal, then $R\geq\frac{n}{\beta}$.
\end{enumerate}
\end{corollary}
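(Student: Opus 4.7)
My plan is to apply Corollary \ref{cor:specializing to warped products} by realizing $\mathbb R^m$, minus a judiciously chosen point, as a Riemannian warped product. For any $p\in\mathbb R^m$, polar coordinates centered at $p$ identify $\mathbb R^m\setminus\{p\}$ isometrically with $S^{m-1}\times_t(0,+\infty)$ with warping function $h(t)=t$; in this description, $\tilde h(x)=|x-p|$ and $\tilde h'(x)\equiv 1$, giving a direct dictionary between the Euclidean geometry of $\varphi$ and the hypotheses of the warped product corollary.

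I would first rule out the edge case $n=m$: an isometric immersion of a complete connected Riemannian $m$-manifold into $\mathbb R^m$ is a local isometry and hence, by the standard covering theorem for local isometries from complete manifolds, a surjective covering map onto $\mathbb R^m$, which is incompatible with the hypothesis $\varphi(M)\subset B_{\mathbb R^m}(0,R)$. Since $n<m$, the set $\varphi(M)$ has $m$-dimensional Lebesgue measure zero in $\mathbb R^m$, so for every $\epsilon>0$ I can pick $p\in B_{\mathbb R^m}(0,\epsilon)\setminus\varphi(M)$; then $\varphi$ factors as an isometric immersion into the warped product $\mathbb R^m\setminus\{p\}$. From $|x-p|\leq|x|+|p|<R+\epsilon$ on $\varphi(M)$ it follows that, with $a=n/(R+\epsilon)$ and $c=R+\epsilon$, the two hypotheses $\tilde h\leq(n/a)\tilde h'$ and $\tilde h\leq c$ of Corollary \ref{cor:specializing to warped products} both hold on $\varphi(M)$.

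Part (a) then follows at once from Corollary \ref{cor:specializing to warped products}(a): under polynomial volume growth, $\varphi$ cannot be minimal. For part (b), assuming $\varphi$ is minimal and $M$ has exponential volume growth like $e^{\beta t}$, Corollary \ref{cor:specializing to warped products}(b) yields $\beta\geq a=n/(R+\epsilon)$, equivalently $R+\epsilon\geq n/\beta$; letting $\epsilon\to 0^+$ gives the desired $R\geq n/\beta$.

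The only genuine obstacle is technical: the warped product structure on $\mathbb R^m$ degenerates at the polar origin, so Corollary \ref{cor:specializing to warped products} cannot be applied directly when $0\in\varphi(M)$. The translation described above, made possible by the measure-zero nature of $\varphi(M)$ in $\mathbb R^m$, resolves this at the cost of an $\epsilon$ that is absorbed in the final limit.
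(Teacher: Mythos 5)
Your proof is correct and follows essentially the same route as the paper: puncture Euclidean space at a point just off $\varphi(M)$ (the paper instead translates $\varphi$ slightly), view $\mathbb R^m\setminus\{p\}$ as $\mathbb S^{m-1}\times_t(0,+\infty)$, apply Corollary \ref{cor:specializing to warped products} with $a=n/(R+\epsilon)$, $c=R+\epsilon$, and let $\epsilon\to 0^+$ (the paper lets $S\to R^+$). Your extra care in excluding $n=m$ and justifying the choice of $p$ via the measure-zero image is a point the paper leaves implicit, but it does not change the argument.
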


\begin{proof}
Taking any $S>R$ and composing $\varphi$ with a translation (which depends on the chosen $S$), we can assume that $$\varphi(M)\subset B_{\mathbb R^m}(0,S)$$ and $0\notin\varphi(M)$. We now look at $\mathbb R^m\setminus\{0\}$ as the warped product
$$\mathbb R^m\setminus\{0\}=\mathbb S^{m-1}\times_t(0,+\infty),$$
where $t$ is the standard coordinate function on $(0,+\infty)$.

Then, in the notations of the statement of the previous corollary and with $a=\frac{n}{S}$ and $c=S$, we have $\tilde h\leq\frac{n}{a}\tilde h'$ and $\tilde h\leq c$. Thus, the former corollary gives item (a), as well as, in item (b), $\beta\geq\frac{n}{S}$. However, this is the same as $S\geq\frac{n}{\beta}$, and since this holds for every $S>R$, we get $R\geq\frac{n}{\beta}$.
\end{proof}

\begin{corollary}\label{cor:specializing to Hm}
Let $\varphi:M^n\to\mathbb H^m$ be an isometric immersion from an oriented, complete noncompact Riemannian manifold $M$ into the hyperbolic $m$-space, such that $\varphi(M)$ is contained in the open half space bounded by a horosphere.
\begin{enumerate}[$(a)$]
\item If $M$ has polynomial volume growth, then $\varphi$ cannot be minimal.
\item If $M$ has exponential volume growth, say like $e^{\beta t}$, and $\varphi$ is minimal, then $\beta\geq n$.
\end{enumerate}
\end{corollary}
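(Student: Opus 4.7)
The plan is to recognize $\mathbb H^m$ as a warped product and then reduce the statement to a direct application of Corollary \ref{cor:specializing to warped products}.

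First, I would use horospherical coordinates to realize $\mathbb H^m$ as a warped product. Concretely, starting from the upper half-space model $\{(x,y)\in\mathbb R^{m-1}\times(0,+\infty)\}$ with metric $y^{-2}(dx^2+dy^2)$ and performing the change of variable $y=e^{-t}$, the hyperbolic metric takes the form $e^{2t}dx^2+dt^2$. Thus $\mathbb H^m=\mathbb R^{m-1}\times_h\mathbb R$ with warping function $h(t)=e^t$, and the horospheres are exactly the level sets $\mathbb R^{m-1}\times\{t\}$ of the second projection.

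Next, since the isometry group of $\mathbb H^m$ acts transitively on pointed horospheres, after composing $\varphi$ with a suitable isometry (which preserves minimality, completeness, and volume growth of $M$), I may assume that the bounding horosphere is $\mathbb R^{m-1}\times\{t_0\}$ and that the horoball containing $\varphi(M)$ corresponds to $\{t<t_0\}$. Setting $c=e^{t_0}$, this yields $\tilde h=e^t\leq c$ on $\varphi(M)$. Since $h'=h$, one has $\tilde h=\tilde h'$, so with the choice $a=n$ the inequality $\tilde h\leq\frac{n}{a}\tilde h'$ holds trivially on $\varphi(M)$.

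All the hypotheses of Corollary \ref{cor:specializing to warped products} are then satisfied with $a=n$ and $c=e^{t_0}$. Part (a) of that corollary gives directly that $\varphi$ cannot be minimal if $M$ has polynomial volume growth. Part (b) gives that, if $\varphi$ is minimal and $M$ has exponential volume growth like $e^{\beta t}$, then $\beta\geq a=n$. The only subtle point—which I would highlight—is the identification of the correct side of the horosphere: the warping function $h(t)=e^t$ is bounded only on $\{t<t_0\}$, and this is precisely the horoball side; the definition of horoball (the ``open half space bounded by a horosphere'' in the statement) unambiguously selects this side.
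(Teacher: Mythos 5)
Your proof is correct and follows essentially the same route as the paper: realize $\mathbb H^m$ as the warped product $\mathbb R^{m-1}\times_{e^t}\mathbb R$ with horospheres as $t$-slices, note $h=h'$ so the hypothesis $\tilde h\leq\frac{n}{a}\tilde h'$ holds with $a=n$ while the horoball condition gives $\tilde h\leq c$, and apply Corollary \ref{cor:specializing to warped products}. Your extra care in normalizing the horosphere to a $t$-slice by an isometry and in identifying the bounded side is a useful explicit justification of a step the paper leaves implicit.
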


\begin{proof}
We look at the hyperbolic $m$-space as the warped product
$$\mathbb H^m=\mathbb R^{m-1}\times_{e^{t}}\mathbb R,$$
where $t$ is the standard coordinate function on $\mathbb R$ and the $t$-slices $\mathbb R^{m-1}\times\{t\}$ are horospheres. Then $h(t)=h'(t)=e^t$ and, in the notations of the statement of the Corollary  \ref{cor:specializing to warped products}, we have $\tilde h\leq\frac{n}{a}\tilde h'$ on $\varphi(M)$ with $a=n$. Moreoever, the condition that $\varphi(M)$ is contained in the open half space bounded by a horosphere is equivalent to the fact that $\tilde h\leq c$ on $\varphi(M)$ for certain positive  $c$. Thus, the result follows directly from Corollary \ref{cor:specializing to warped products}.
\end{proof}


\begin{thebibliography}{20}

\bibitem{Alias:19}
L. J. Al\'{\i}as, A. Caminha and F. Y. do Nascimento.
{\em A maximum principle at infinity with applications}.
J. Math. Anal. Appl. {\bf 474} (2019), 242-247.

\bibitem{Alias:06}
L. J. Al\'{\i}as, J. H. S. de Lira \and J. Miguel Malacarne.
{\em Constant higher order mean curvature hypersurfaces in Riemannian spaces}.
J. Inst. Math. Jussieu {\bf 5} (2006), 527-562.

\bibitem{Aronszajn:57}
N. Aronszajn.
{\em A  unique continuation theorem for  solutions of elliptic partial differential equations or inequalities of second order}. 
J.  Math. Pures Appl. {\bf 36} (1957), 235-249.

\bibitem{Caminha:06} 
A. Caminha. 
{\em A rigidity theorem for complete CMC hypersurfaces in Lorentz manifolds}. 
Diff. Geom. Appl. {\bf 24} (2006), 652-659.

\bibitem{Cheng:76} 
S. Y. Cheng and S. T. Yau. 
{\em Maximal spacelike hypersurfaces in the Lorentz-Minkowski space}. 
Ann. Math. {\bf 104} (1976), 407-419.

\bibitem{Dillen:09}
F. Dillen, J. Fastenakels \and J. Van der Veken.
{\em Surfaces in $\mathbb S^2\times\mathbb R$ with a canonical principal direction}. 
Ann. Global Anal. Geom. {\bf 35} (2009), 381-396.

%\bibitem{Dungey:03}
%N. Dungey, A. F. M. ter Elst \text{and} D. W. Robinson.
%{\em Analysis on Lie groups with polynomial growth}.
%Boston, Birkh\"auser, 2003.

\bibitem{FischerColbrie:80} 
D. Fischer-Colbrie e R. Schoen. 
{\em The structure of complete stable minimal surfaces in $3-$manifolds of non-negative scalar curvature}.
Comm. Pure Appl. Math. {\bf 33} (1980), 199-211.

\bibitem{Garnica:12}
E. Garnica, O. Palmas \and G. Ruiz-Hern\'andez.
{\em Hypersurfaces with a canonical principal direction}.
Diff. Geom. Appl. {\bf 30} (2012), 382-391.

%\bibitem{Jenkins:73}
%J. W. Jenkins.
%{\em A characterization of growth in locally compact groups}.
%Bull. Amer. Math. Soc. {\bf 79} (1973), 103-106.

\bibitem{Schoen:75}
R. Schoen, L. Simon \text{and} S. T. Yau.
{\em Curvature estimates for minimal hypersurfaces}. 
Acta Math. {\bf 134} (1975), 275-288.

\bibitem{Xin:03}
Y. Xin.
{\em Minimal Submanifolds and Related Topics}. 
Nankai Tracts in Math., Singapore, World Scientific Pub. Co., 2003.

\end{thebibliography}
\end{document}